\newcommand{\LoadPackagesNow}{}
\newcommand{\LoadPackageLater}[1]{%
   \g@addto@macro{\LoadPackagesNow}{%
      \usepackage{#1}%
   }%
}
\definecolor{pdfurlcolor}{rgb}{0,0,0.6}
\definecolor{pdffilecolor}{rgb}{0.7,0,0}
\definecolor{pdflinkcolor}{rgb}{0,0,0.6}
\definecolor{pdfcitecolor}{rgb}{0,0,0.6}
\g@addto@macro\bfseries{\boldmath}
\newcommand{\ifargdef}[3][{}]{\ifthenelse{\equal{#2}{}}{#1}{#3}}
\newtheoremstyle{claim}
	{\topsep}{\topsep}%
	{\itshape}
	{}
	{\bfseries\boldmath}
	{}
	{.5em}
	{\thmname{#1} \thmnumber{#2} \thmnote{(#3)}}
\newtheoremstyle{definition}
	{\topsep}{\topsep}%
	{}
	{}
	{}
	{}
	{.5em}
	{{\bfseries\thmname{#1} \thmnumber{#2}} \thmnote{(#3)}}
\newtheoremstyle{remark}
	{\topsep}{\topsep}%
	{}
	{}
	{}
	{}
	{.5em}
	{{\itshape\thmname{#1}:}}
\newtheoremstyle{example}
	{\topsep}{\topsep}%
	{}
	{}
	{}
	{}
	{.5em}
	{{\itshape\thmname{#1}:}}
\declaretheorem[style=claim,numberwithin=section]{theorem}
\declaretheorem[style=claim,sibling=theorem]{proposition}
\declaretheorem[style=claim,sibling=theorem]{lemma}
\declaretheorem[style=claim,sibling=theorem]{remark}
\declaretheorem[style=definition,sibling=theorem]{definition}
\newcommand{\opleft}[1]{\mathopen{}\left#1}
\newcommand{\opright}[1]{\right#1\mathclose{}}
\newcommandx{\braces}[4]{%
\ifstrequal{#3}{normal}{#1#4#2}{%
\ifstrequal{#3}{auto}{\left#1#4\right#2}{%
\ifstrequal{#3}{opauto}{\opleft#1#4\opright#2}{%
#3#1#4#3#2}}}%
}
\newcommandx{\opannot}[3][3=\downarrow]{\stackrel{\mathclap{\substack{#1 \\ #3 \vspace{2pt}}}}{#2}}
\newcommandx{\lineannot}[3][3=\rightarrow]{\mathllap{\boxed{\text{\textsmaller{#1}}} #3} #2}
\newcommandx{\multilineannot}[4][4=\rightarrow]{\mathllap{\boxed{\parbox{#1}{\RaggedRight\textsmaller{#2}}} #4} #3}
\newcommand{\N}{\mathbb{N}} 
\newcommand{\R}{\mathbb{R}} 
\newcommand{\sprod}[1]{\left\langle #1 \right\rangle}
\DeclareMathOperator{\cone}{cone}
\DeclareMathOperator{\one}{\mathbb{1}}
\newcommand{\prb}[1]{\mathbb{P}\left( #1 \right)}
\newcommand{\erw}[1]{\mathbb{E}\left( #1 \right)}
\newcommand{\sse}{\subseteq}
\newcommand{\suchthat}[1][normal]{\ifstrequal{#1}{normal}{\mid}{#1|}} 
\newcommandx{\intvcl}[3][1=normal]{\braces{[}{]}{#1}{#2, #3}} 
\newcommandx{\intvop}[3][1=normal]{\braces{(}{)}{#1}{#2, #3}} 
\newcommandx{\intvclop}[3][1=normal]{\braces{[}{)}{#1}{#2, #3}} 
\newcommandx{\intvopcl}[3][1=normal]{\braces{(}{]}{#1}{#2, #3}} 
\newcommand{\chull}{\operatorname{conv}}
\DeclareMathOperator*{\argmin}{argmin} 
\newcommandx{\abs}[2][1=normal]{\braces{\lvert}{\rvert}{#1}{#2}} 
\newcommandx{\ceil}[2][1=normal]{\braces{\lceil}{\rceil}{#1}{#2}} 
\newcommandx{\floor}[2][1=normal]{\braces{\lfloor}{\rfloor}{#1}{#2}} 
\newcommandx{\round}[2][1=normal]{\braces{[}{]}{#1}{#2}} 
\newcommandx{\der}[1]{D^{#1}} 
\newcommandx{\partder}[4][1={},4={}]{\frac{\partial^{#4} #2}{\partial #3^{#4}}\ifargdef{#1}{\Big|_{#1}}} 
\newcommandx{\integ}[4][1={},2={}]{\int_{#1}^{#2} #3 \, #4} 
\newcommandx{\asympffaster}[2][1=normal]{o\braces{(}{)}{#1}{#2}} 
\newcommandx{\asympfaster}[2][1=normal]{O\braces{(}{)}{#1}{#2}} 
\newcommandx{\asympeq}[2][1=normal]{\Theta\braces{(}{)}{#1}{#2}} 
\newcommandx{\asympsslower}[2][1=normal]{\omega\braces{(}{)}{#1}{#2}} 
\newcommandx{\asympslower}[2][1=normal]{\Omega\braces{(}{)}{#1}{#2}} 
\newcommandx{\norm}[2][1=normal]{\braces{\|}{\|}{#1}{#2}} 
\renewcommandx{\sp}[3][1=normal]{\braces{\langle}{\rangle}{#1}{#2, #3}} 
\newcommand{\adj}[1]{{#1}^\ast} 
\newcommandx{\End}[2][2={}]{\mathcal{L}\opleft( #1 \ifargdef{#2}{, #2} \opright)} 
\newcommandx{\measure}[2][1=normal]{\operatorname{vol}\braces{(}{)}{#1}{#2}} 
\DeclareMathOperator{\supp}{supp} 
\newcommandx{\Leb}[3][1={},3=normal]{L^{#2}\ifargdef{#1}{\braces{(}{)}{#3}{#1}}{}} 
\newcommandx{\Lebnorm}[4][1=normal,3={2},4={}]{\norm[#1]{#2}_{\Leb[#4]{#3}}} 
\renewcommandx{\l}[3][1={},3=normal]{\ell^{#2}\ifargdef{#1}{\braces{(}{)}{#3}{#1}}} 
\newcommandx{\lnorm}[4][1=normal,3={2},4={}]{\norm[#1]{#2}_{\l[#4]{#3}}} 
\newcommandx{\Smooth}[4][1={},3={},4=normal]{C_{#3}^{#2}\ifargdef{#1}{\braces{(}{)}{#4}{#1}}} 
\newcommandx{\Schwartz}[2][1={},2=normal]{\mathscr{S}\ifargdef{#1}{\braces{(}{)}{#2}{#1}}} 
\newcommandx{\Schwartzpoly}[2][1=normal]{\braces{\langle}{\rangle}{#1}{\abs[#1]{#2}} } 
\newcommandx{\Tempdistr}[2][1={},2=normal]{\mathscr{S}'\ifargdef{#1}{\braces{(}{)}{#2}{#1}}} 
\newcommandx{\distrinp}[3][1=normal]{\braces{\langle}{\rangle}{#1}{#2, #3}} 
\newcommand{\Linedistr}[1][]{\mathfrak{L}\ifargdef{#1}{_{#1}}{}} 
\newcommandx{\ft}[3][1=default,2=auto]{
\ifstrequal{#1}{default}{\widehat{#3}}{
\ifstrequal{#1}{long}{{\braces{(}{)}{#2}{#3}}^{\wedge}}{}}} 
\newcommand{\calC}{\mathcal{C}}
\newcommand{\calP}{\mathcal{P}}
\newcommandx{\ift}[3][1=default,2=auto]{
\ifstrequal{#1}{default}{\check{#3}}{
\ifstrequal{#1}{long}{{\braces{(}{)}{#2}{#3}}^{\vee}}{}}} 
\newcommand{\rnd}{\operatorname{round}}
\newcommand{\set}[1]{\left\{ #1 \right\}} 
\newcommand{\sk}[1]{{\color{black}{#1}}}
\newcommand{\sg}[1]{{\color{black}{#1}}}
\title{\bfseries Recovery of Binary Sparse Signals with Biased Measurement Matrices}
\author{\hspace*{-1cm}
  Axel Flinth, Sandra Keiper\\[.5em]
  \small{\textsc{\hspace*{-1cm}Institut für Mathematik, Technische Universit\"at Berlin }}\\[.5em]
}
\begin{document}

\listoftodos
\newcommand{\OpAnalysis}[1]{T_{#1}} 
\newcommand{\OpSynthesis}[1]{\adj T_{#1}} 
\newcommand{\OpFrame}[1]{S_{#1}} 
\newcommand{\defsf}{\varphi} 
\newcommand{\InpSp}{\mathcal{H}} 
\newcommand{\InpSpK}{\InpSp_K} 
\newcommand{\ProK}{P_K} 
\newcommand{\InpSpM}{\InpSp_M} 
\newcommand{\ProM}{P_M} 
\newcommand{\sig}{x^0} 
\newcommand{\sigrec}{x^\star} 
\newcommand{\PF}{\Phi} 
\newcommand{\pf}{\phi} 
\newcommand{\cluster}{\Lambda} 
\newcommand{\concentr}[2]{\kappa\ifargdef{#1}{\opleft( #1, #2 \opright)}} 
\newcommand{\clustercoh}[2]{\mu_c \ifargdef{#1}{( #1 , #2)}} 
\newcommand{\anorm}[2]{\norm{#1}_{1,#2}} 
\newcommand{\ver}{\mathrm{v}} 
\newcommand{\hor}{\mathrm{h}} 
\newcommand{\dir}{\imath} 
\newcommand{\meyerscal}{\phi} 
\newcommand{\Scalfunc}{\Phi} 
\newcommand{\Corofunc}{W} 
\newcommand{\Coro}{\mathscr{K}} 
\newcommand{\conefunc}{v} 
\newcommand{\Conefunc}[1]{V_{(#1)}} 
\newcommand{\Cone}[1]{\mathscr{C}_{(#1)}} 
\newcommand{\pscal}{A} 
\newcommand{\pshear}{S} 
\newcommand{\pscalcone}[2]{A_{#1,(#2)}} 
\newcommand{\shearcone}[1]{S_{(#1)}} 
\newcommand{\unishplain}{\psi} 
\newcommand{\unishplainft}{\ft{\unishplain}} 
\newcommand{\unish}[5][{}]{\unishplain_{#3,#4,#5}^{#2\ifargdef{#1}{,(#1)}}} 
\newcommand{\unishft}[5][{}]{\unishplainft_{#3,#4,#5}^{#2\ifargdef{#1}{,(#1)}}} 
\newcommand{\Scalparamdomain}{\mathsf{A}} 
\newcommand{\aj}{\alpha_j} 
\newcommandx{\Unish}[3][1=\meyerscal,2=\conefunc,3=(\aj)_j]{\operatorname{SH}(#1, #2, #3)} 
\newcommandx{\UnishLow}[1][1=\meyerscal]{\operatorname{SH}_{\mathrm{Low}}(#1)} 
\newcommandx{\UnishInt}[3][1=\meyerscal,2=\conefunc,3=(\aj)_j]{\operatorname{SH}_{\mathrm{Int}}(#1, #2, #3)} 
\newcommandx{\UnishBound}[3][1=\meyerscal,2=\conefunc,3=(\aj)_j]{\operatorname{SH}_{\mathrm{Bound}}(#1, #2, #3)} 
\newcommand{\Unishgroup}{\Gamma} 
\newcommand{\Unishind}{\gamma} 
\newcommand{\lmax}{l_j} 
\newcommand{\weight}{w} 
\newcommand{\wlen}{\rho} 
\newcommand{\model}{{\weight\Linedistr}} 
\newcommand{\modelrec}{\model^\star} 
\newcommand{\Corofilter}{F} 
\newcommand{\mdiam}{h} 
\newcommand{\mask}[1]{\mathscr{M}_{#1}} 
\newcommand{\Unishshort}{\Psi} 
\newcommand{\scalpm}[1]{#1^{\pm1}} 
\newcommand{\translind}[2]{#1^{(#2)}} 
\maketitle

\begin{abstract}
This work treats the recovery of sparse, binary signals through box-constrained basis pursuit using biased measurement matrices. Using a probabilistic model, we provide conditions under which the recovery of both sparse and \sg{saturated} binary signals is \sg{very likely}. In fact, we \sg{also} show that under the same condition, the solution of the boxed-constrained basis pursuit program can be found using boxed-constrained least-squares.
\end{abstract}

\vspace{.1in}
\noindent\textbf{Keywords.}
Compressed Sensing, Sparse Recovery, Null Space Property, Finite Alphabet, Binary Signals, Dual Certificates\\
\textbf{AMS classification.} 15A12, 15A60, 15B52, 42A61, 60B20, 90C05, 94A12, 94A20
\vspace{.1in}

\section{Introduction}

Compressed sensing is nowadays a well-known effective tool to aquire signals $x_0\in \R^N$ from underdetermined systems of linear equations $Ax_0+n=b\in \R^m$, where $n\in \R^m$ is some noise vector. If $m< N$, the recovery problem is per se ill-posed, but can be turned into a well-posed one by imposing an a-priori structure on $x_0$.  An important choice, which has been treated extensively in the literature, is that of \emph{sparsity}, i.e., that only a few entries in $x_0$ are different from zero. The framework of \emph{compressed sensing} offers a systematic way of analyzing such inverse problems. \sg{We refer to \cite{DDEK} for a} survey of the most important results.

Another structural assumption of interest is that of $x_0$ having values in a finite discrete alphabet. \sg{Finite-valued and sparse signals appear, for example, in error correcting codes \cite{CRTV} as well as massive Multiple-Input Multiple-Output (MIMO) channel \cite {RHE} and wideband spectrum sensing \cite{ALLP}. A particular example is given by wireless communications, where the transmitted signals are sequences of bits, i.e., with entries in $\{0,1\}^N$ or $\{-1,1\}^N$. One could for instance think of $x_0$ as a representation of certain transmitters being either on (\sk{$(x_0)_i=1$}) or off (\sk{$(x_0)_i=0$}). The operator $A$ then models the map from transmitter configurations to measurements at a receiver.
In this \sk{work}, we will concentrate on the case of binary signals with $x_0 \in \{0,1\}^N$. However, all the results hold also true for binary signals with $x_0 \in \{-1,1\}^N$ or any other binary dictionary through translation.}

 In certain types of networks just described, it is reasonable to assume that only a few transmitters are active at a certain instance, which naturally induces sparsity. Hence, it is interesting to consider signals which enjoy \emph{both} structures at the same time. This problem has only very recently been considered in the literature. We refer to\cite{KKLP} for an introduction, as well as for a literature review.

 In the \sg{remainder of the introduction}, we specifically aim to review a small subselection of the known results, in particular a few which we will need in the following to keep the paper self-contained.

%

\sk{\subsection{Random Measurement Matrices}
Most of the results in compressed sensing are based on a random measurement process, meaning that the entries of the measurement matrix $A\sg{\in \R^{m,N}}$ follow some random distribution. The most prominent distributions used in the literature are the Gaussian and the Rademacher distribution. We call $A\in \R^{m,N}$ \emph{Gaussian} if its entries are independently drawn from a renormalized normal
distribution, i.e.,
\begin{align}\label{eq:GaussianMatrix}
	A=m^{-1/2}\; [a_{ij}]_{ij=1}^{m,N} \quad \text{with} \quad  a_{ij} \sim \mathcal{N}(0,1).
\end{align}
On the other hand a \emph{Rademacher matrix} $A\in \R^{m,N}$ has its entries independently chosen to be $m^{-1/2}$ or $-m^{-1/2}$ with equal probability $1/2$, i.e.,
\begin{align}\label{eq:RademacherMatrix}
	A=m^{-1/2}\; [a_{ij}]_{ij=1}^{m,N} \quad \text{with} \quad  \mathbb{P}(a_{ij}=1)= \mathbb{P}(a_{ij}=-1)=1/2.
\end{align}	 

Those typically chosen matrices have the specific characteristic that they are \emph{centered}, i.e., the expected value of each entry is $0$. However, as it will turn out in this work, for the reconstruction of binary signals non-centered matrices have some advantages. This phenomenon was already observed in the recent publication \cite{JK} for the reconstruction of nonnegative-valued signals. Here, so-called \emph{$0/1$-Bernoulli matrices} have been used. In contrast to Rademacher matrices the entries are independently chosen to be either $0$ or $1$, i.e.,
\begin{align}\label{eq:01BernoulliMatrix}
	A=m^{-1/2}\; [a_{ij}]_{ij=1}^{m,N} \quad \text{with} \quad  \mathbb{P}(a_{ij}=1)= \mathbb{P}(a_{ij}=0)=1/2.
\end{align}
Note that \sg{these} matrices can be very easily constructed from Rademacher matrices $A\in \R^{m,N} $ by $\frac{1}{2}(\one + A)$, where $\one\in \R^{m,N}$ denotes the all-one matrix.}

\subsection{Reconstruction of Nonnegative-Valued Signals}
\sg{Binary signals are in particular nonnegative-valued, so that  results concerning recovery of such signals can be readily applied to binary ones. We will therefore dedicate this subsection to the reconstruction of nonnegative-valued signals.}

The task of reconstructing nonnegative-valued signals from few measurements has gained some interest over the last years. It has become evident that basis pursuit restricted to the positive orthant $\R^N_+:=\{x=(x_i)_{i=1}^N\in \R^N: x_i\ge 0, i=1,\dots,N\}$, i.e., the program
\begin{align}\label{P+}
 \min \|x\|_1 \quad \text{subject to}\quad Ax=Ax_0 \quad \text{and} \quad x\in \R_+^N,\tag{$P_+$}
\end{align}
has a strong performance at recovering nonnegative-valued, sparse signals. The relatively simple structure of the method allows it to be thoroughly analytically analyzed.

In \cite{Stojnic+}, Stojnic introduced a new nullspace property and derived precise bounds on the sufficient number of measurements needed to recover a given nonnegative-valued signal $x_0$ using \eqref{P+}. The random matrices were assumed to have null-spaces whose basis is distributed according to either the Gaussian or Rademacher distribution. 

In \cite{DT}, Donoho and Tanner presented a different, more geometric, analysis of the problem. Their argument relies on the fact that if $F$ is defined as the convex hull of the vectors $\set{e_i: i\in K}$, it holds: A certain nonnegative-valued signal $x_0$ having non-zero entries on a set $K$ is recovered by \eqref{P+} if and only if $AF$ is a $k$-face of the projected polytope $A\R^N_+$. They managed to compute the probability of such a face ''surviving'' the projection with a random \sk{matrix} $A$ having a distribution \sk{fulfilling} certain assumptions. Since we will use their result later on, let us repeat it here.

\begin{definition}
 Let $B$ be a random $(m \times N)$-matrix. We say that $B$
 \begin{enumerate}
  \item is \emph{orthant-symmetric} if for each diagonal matrix $S\in \R^{N,N}$ with diagonal in $\{-1,1\}^N$ and every measurable set $\Omega\subset\R^{m,N}$, it holds
  \begin{align}
   \prb{BS\in \Omega}=\prb{B\in \Omega}, 
  \end{align}
  \item is in \emph{general position} if every subset of $m$ columns is almost surely linearly independent,
  \item has \emph{exchangable columns} if for each permutation matrix $\Pi\in\R^{N,N}$ and every measurable set $\Omega\subset \R^{m,N}$
  \begin{align}
   \prb{B\in \Omega}=\prb{B\Pi \in \Omega}.
  \end{align}
 \end{enumerate}
We further say that a random subspace $V\subset \R^N$ with basis $\{a_1,\dots, a_{N-m}\}$ is a \emph{generic random subspace} if the matrix $A:=[a_1 \dots a_{N-m}]^T$ is in general position. We call $V$  \emph{orthant symmetric} if $A$ is.
\end{definition}

Having introduced these notions, we can recall a specific result \sg{(Lemmas 2.2, 2.3) from \cite{DT} \sk{which} we will \sg{also} need in the sequel. It states that with high probability, a $k$-face of the orthant $\R^N_+$ survives under a random projection. More precisely it states the following: Let $F$ denote a $k$-face of the orthant $\R^N_+$ and \sk{let} $A\in \R^{m,N}$, with $m<N$, be in general position and have an orthant symmetric nullspace. Then
 \begin{align}\label{eqn:survivingFaces}
  \prb{AF \text{ is a } k-\text{face of } A\R^N_+}=1-P_{N-m, N-k},
 \end{align}
where
\begin{align}
 P_{ij}=2^{-j+1}\sum_{l=0}^{i-1}{i-1\choose l}, \quad {i,j\in \N}.
 \end{align}
%
This result }in particular gives the probability of success of the program \eqref{P+} recovering a certain $k$-sparse nonnegative-valued vector $x_0 \in \R^N$.

\subsection{Reconstruction of Binary Signals}

Not only for the reconstruction of nonnegative-valued signals \sg{several} results are already known, but also for binary signals. In this case, the canonical approach is to use the following adaptation of basis pursuit, to which one typically refers to as \emph{basis pursuit with box constraints}:

\begin{align}\label{Pbin}\tag{$P_{\text{bin}}$}
	\min\|x\|_1 \quad \text{subject to} \quad Ax=b \quad \text{and}\quad x\in [0,1]^N.
\end{align}

In \cite{DTHC} this algorithm was considered for the recovery of \emph{$k$-simple signals}. Those are signals having entries in $[0,1]$ with at most $k$ entries not equal to either $0$ or $1$. Hence, binary signals fall into the class of $0$-simple signals for \emph{any} sparsity level. Consequently, the analysis of \cite{DTHC} did not take sparsity into account.

In \cite{Stojnic+}, however, performance guarantees for sparse binary signals have been proven. It was shown that at most $N/2$  measurements are needed to recover a binary signals via \eqref{Pbin}. For values $k\ge N/2$, approximately $N/2$ will be needed, whereas the number can be reduced for $k<N/2$ (see the blue/solid curve in Figure \ref{fig:PhaseTransMirr}). 

The following null space condition \sg{has been shown to be sufficient \cite{Stojnic+} and necessary \cite{KKLP} for the success of \eqref{Pbin}. The vector $\mathds{1}_K$ is the unique solution of \eqref{Pbin} with $b=A\mathds{1}_K$ if and only if
\begin{align}
 \ker{A}\cap N^+\cap H_K =\{0\} \label{BNSP} \tag{B-NSP},
\end{align}}
where $N^+=\{w\in \R^N:\sum_{i=1}^N w_i\le 0\}$ and $H_K=\{w\in \R^N: w_i\le 0\text{ for } i\in K, \text{ and }
w_i\ge 0 \text{ for } i\in K^C\}$.


To ensure robust recovery from noisy measurements $b= Ax_0+n$, with $\|n\|_2\le \eta$, the following adaptation of \eqref{Pbin} has been considered \sg{(e.g. in \cite{KKLP})}:
\begin{align}
\label{BPBinRob}
\min\|x\|_{1} \quad \text{subject to} \quad \|Ax - b \|_{2} \le \eta \quad \text{and} \quad x\in [0,1]^N \tag{P$_{\text{bin}}^{\eta}$}.
\end{align}
Note \sg{that in order to define this algorithm properly the noise level $\eta$ is required to be known in advance.} 

By applying the framework of \emph{statistical dimensions} \cite{ALMT}, the authors of \cite{KKLP} \sg{estimated} the number of Gaussian measurements $\Delta_{bin}(k)$ needed to recover a $k$-sparse binary signal with high probability using \eqref{Pbin}. A plot of $\Delta_{bin}$ as well as the results of a numerical experiment validating the bound, is shown in Figure \ref{fig:Pbin}. \sk{This experiment is specified in Subsection \ref{subsec:BiasedMatrices}.}

\begin{figure}[H]
	\centering
	\captionsetup{justification=centering}
	\subfigure[Reconstruction error of \eqref{Pbin}.]{\includegraphics[width=0.43\textwidth]{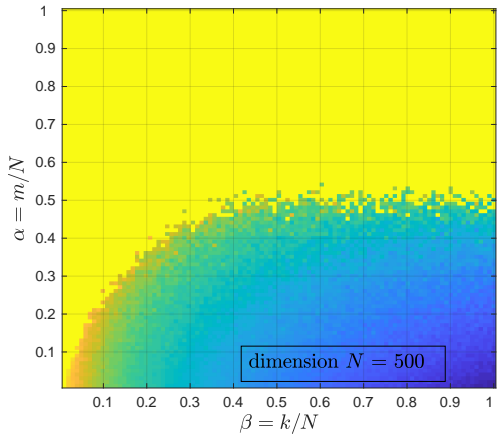}\label{fig:bink}}
	\hspace*{0.5cm}
	\subfigure[In \cite{Stojnic+, KKLP} theoretically proven phase transition of \eqref{Pbin} in comparison to the classical algorithm $(P_+)$ and $(P_1)$ for arbitrary $N\in\N$. Successful Reconstruction is guaranteed in the area above the curves with high probability.]{\includegraphics[width=0.47\textwidth]{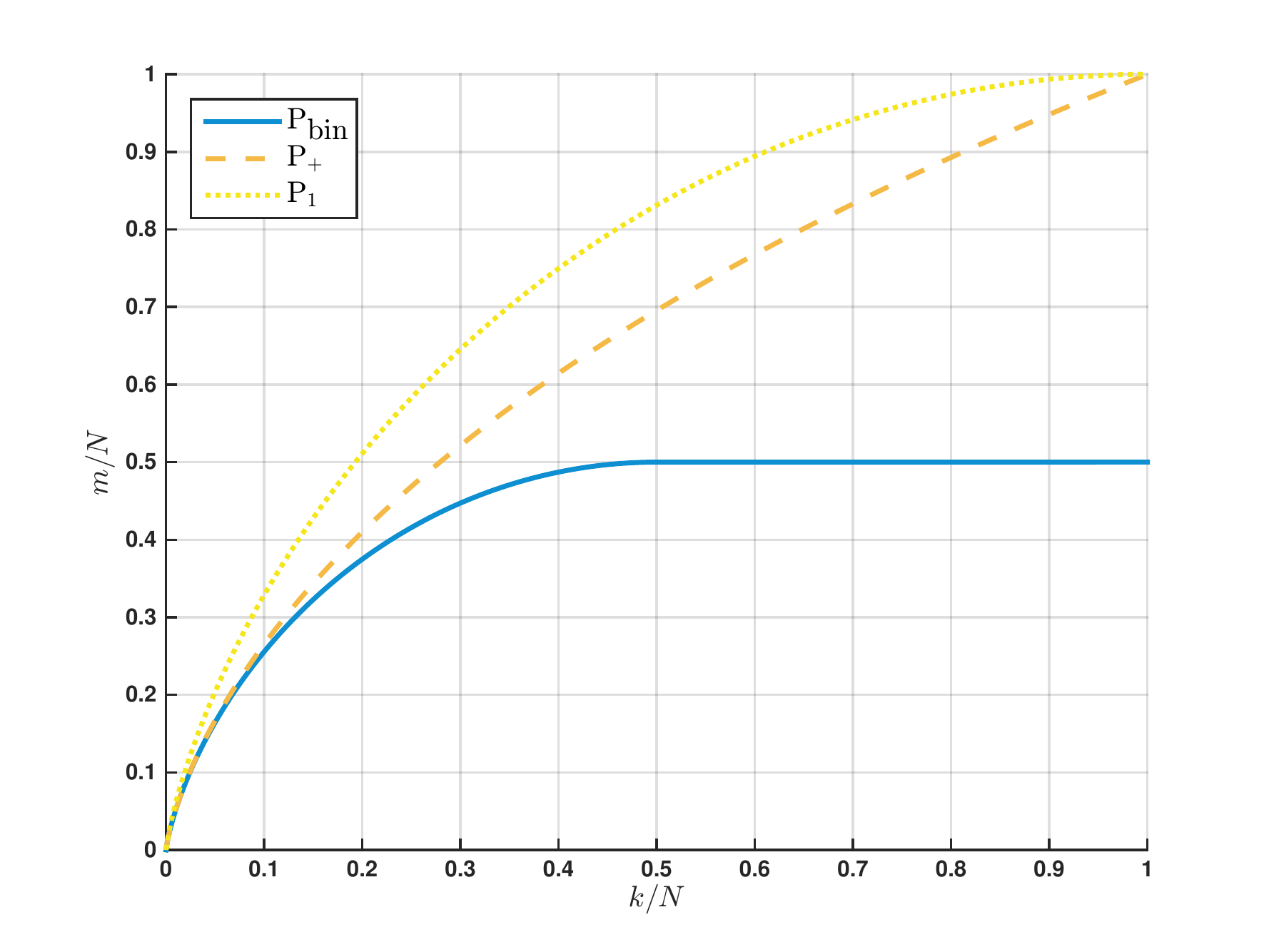}\label{fig:PhaseTrans}}
	\caption{Numerically and theoretically derived phase transition of \eqref{Pbin} \sk{from Gaussian measurements}.\\ \sk{\footnotesize{The experiment yielding this numerics is explained in more detail in Subsection \ref{subsec:BiasedMatrices}}}. \label{fig:Pbin}}
\end{figure}
 
\subsection{Main results}
Up until now, most measurement matrices \sk{that have been} considered \sk{were} centered, i.e.\sk{,} the expected value of each entry \sk{was assumed} to be $0$.  A simple numerical experiment reveals that\sk{,} when recovering sparse binary signals using \eqref{Pbin}, this might not be optimal. In Figure \ref{PbinBiased}, we have repeated the experiments used to generate Figure \ref{fig:Pbin}, but with $A$ being a $0/1$-Bernoulli matrix instead of a Gaussian. Two observations can be made:
\begin{enumerate}
\item For both the Gaussian and Bernoulli distribution, \sg{the numerical experiments indicate that using} $m > N/2$ measurements secures recovery with high probability, independent of the sparsity level.

\item In the Bernoulli case, and \sg{not in the Gaussian} case, \sg{the numerical experiments suggest that} the recovery of a sparse binary signal is equally probable to the recovery of an \sg{\emph{saturated}} binary signal, \sk{i.e., an signal which has only a few entries equal to zero.}

\end{enumerate}
\begin{figure}
	\centering 
	\captionsetup{justification=centering}
	\includegraphics[width=0.48\textwidth]{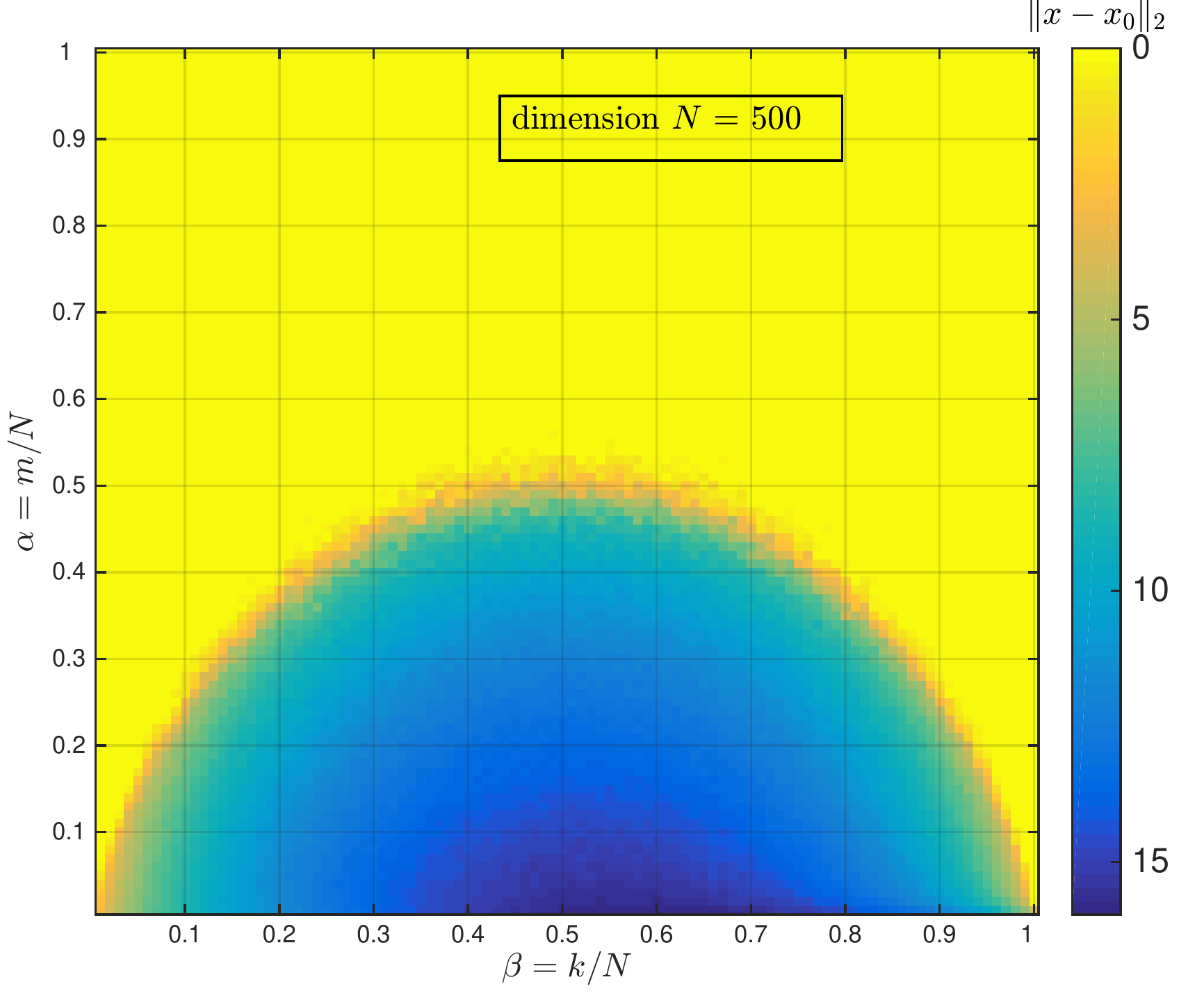}\label{fig:BiasedBP}
	\caption{Reconstruction from $0/1$-Bernoulli measurements \sk{via \eqref{Pbin}}. \\\sk{\footnotesize{The experiment yielding this figure is explained in more detail in Subsection \ref{subsec:BiasedMatrices}. \label{PbinBiased}}}}
	\end{figure}
We will provide statements \sk{which explain these observations} not only for Bernoulli matrices, but instead for \emph{biased} measurement matrices:
	\begin{align}
	 A = \mu \one + D. \label{BiasedMM}
	 \end{align}
	Here, $\mu\geq 0$ is a parameter (the expected values of the entries $a_{ij}$ of $A$), $\one \in \R^{m,N}$ is the matrix having only entries equal to $1$, and $D\sk{\in \R^{m,N}}$ is \sk{assumed to be} centered. We will make the simple assumption that the entries $d_{ij}$ of $D$ are i.i.d., with
	\begin{align} \label{eq:assumption}
		\erw{d_{ij}}=0, \ \erw{d_{ij}^2}=\sigma^2, \ d_{ij} \in \sk{[-\Lambda, \Lambda]} \text{ almost surely}.
	\end{align}
	
\sk{To verify} the first of the above observations, we \sk{will prove} the following result:

\begin{theorem}[Simplified Version of Theorem \ref{thm:largerN/2}]\label{thm:simple1}
Let $x_0\in \{0,1\}^N$ be a binary vector, and $A\in \R^{m,N}$  be a random matrix of the form \eqref{BiasedMM}, with \sg{some} additional assumption\sg{s.}
 Then if $m> N/2$, 
$x_0$ will be the solution to \eqref{Pbin} with high probability.
\end{theorem}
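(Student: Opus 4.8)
The plan is to verify the binary null space property \eqref{BNSP} for the support $K=\supp(x_0)$, since by the cited characterization $x_0=\mathds{1}_K$ is the unique solution of \eqref{Pbin} precisely when $\ker A\cap N^+\cap H_K=\{0\}$. It therefore suffices to show that this intersection is trivial with high probability. The first observation is that the bias turns membership in the kernel into an affine condition on $D$ alone: writing $s=\sum_i w_i$, the identity \eqref{BiasedMM} gives $Aw=\mu s\,\mathbf 1_m+Dw$, so that $w\in\ker A$ is equivalent to $Dw=-\mu s\,\mathbf 1_m$, i.e.\ $Dw$ must be the multiple of the all-ones vector fixed by the coordinate sum of $w$.

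Next I would eliminate the two remaining constraints by a sign flip and an augmentation. Let $S_K$ be the diagonal $\pm1$ matrix that is $-1$ on $K$ and $+1$ on $K^C$, so that $H_K=S_K\R^N_+$; substituting $w=S_K v$ turns $H_K$ into the nonnegative orthant and replaces $D$ by $DS_K$. Assuming (this is one of the additional hypotheses) that $D$ is \emph{orthant-symmetric} in the sense of the Definition above, $DS_K$ has the same distribution as $D$. The half-space constraint $s\le 0$ and the relation $Dw=-\mu s\,\mathbf 1_m$ are then encoded homogeneously by a single slack coordinate $t\ge 0$ equal to $-s$: with $e_K=\mathds{1}_K-\mathds{1}_{K^C}$ one checks that \eqref{BNSP} holds if and only if
\begin{equation}
\ker M\cap\R^{N+1}_+=\{0\},\qquad M=\matr{DS_K & -\mu\,\mathbf 1_m\\ e_K^\T & -1}\in\R^{(m+1),(N+1)},
\end{equation}
where for the nontrivial direction one uses $\mu>0$. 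This packages the whole problem as the assertion that no nonzero nonnegative combination of the $N+1$ columns of $M$ vanishes, equivalently that the origin does not lie in their convex hull.

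At this point the question is exactly of Wendel / Donoho--Tanner type. If the columns of $M$ were independent and symmetric about the origin in $\R^{m+1}$, then by the same count that underlies \eqref{eqn:survivingFaces} the probability that the origin avoids their convex hull would be
\begin{equation}
2^{-N}\sum_{k=0}^{m}\binom{N}{k}=\Prob{\mathrm{Bin}(N,\tfrac12)\le m},
\end{equation}
which by a Chernoff/CLT estimate tends to $1$ once $m$ grows past $N/2$, giving exactly the claimed threshold and its independence of the sparsity level. Dually, this is the existence of a vector $\eta$ with $M^\T\eta>0$, i.e.\ a dual certificate for \eqref{Pbin}, matching the dual-certificate viewpoint advertised in the introduction.

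The main obstacle is that the columns of $M$ are \emph{not} i.i.d.\ symmetric: the last row $(e_K^\T,-1)$ and the bias column $(-\mu\,\mathbf 1_m,-1)$ are deterministic, so the clean Wendel symmetry is broken, and moreover for discrete $D$ of Rademacher- or Bernoulli-type the general-position hypothesis of the Definition is not automatic. The heart of the full Theorem~\ref{thm:largerN/2} is thus to control these deterministic rows and columns --- presumably by conditioning on the coordinate sum $s$, or by a perturbation of the face-counting argument that exploits the quantitative assumptions $\erw{d_{ij}^2}=\sigma^2$ and $d_{ij}\in[-\Lambda,\Lambda]$ in \eqref{eq:assumption} --- while still extracting the binomial tail above. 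I expect this robustness of the Wendel/Donoho--Tanner count to the bias to be the genuinely delicate step; the reductions producing $M$ are, by contrast, routine linear algebra.
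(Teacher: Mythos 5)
Your reduction to the augmented matrix $M$ is sound linear algebra, but the argument stops exactly where the real work begins, and you give no way to close the gap you yourself flag: the Wendel/Donoho--Tanner count behind \eqref{eqn:survivingFaces} needs the columns to be orthant-symmetric and exchangeable (plus in general position), and the deterministic column $(-\mu\one_m,-1)^\T$ together with the deterministic last row $(e_K^\T,-1)$ destroys both properties. Conditioning on the coordinate sum or ``perturbing'' the face count is not shown to restore them, so the claimed binomial tail $2^{-N}\sum_{k\le m}\binom{N}{k}$ is not justified for $M$. That is the missing heart of the proof.

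The paper avoids this obstacle by two moves you did not make. First, it proves the \emph{stronger} condition $\ker(A)\cap H_K=\{0\}$ rather than \eqref{BNSP}; dropping the half-space $N^+$ means no slack coordinate and hence no extra deterministic row or column is ever introduced (and one gets for free, via Proposition \ref{prop1}, that $\{x\in[0,1]^N:Ax=Ax_0\}$ is a singleton, so both $x_0$ and $\one-x_0$ are recovered). Second, instead of encoding the bias as an extra column, it eliminates the bias by projection: by Proposition \ref{prop:HKPlus} it suffices to exhibit $v$ with $A^\T v\in H_K^+$, and one searches for such $v$ inside $\sprod{\one}^\perp$, where $\sprod{v,a_i}=\sprod{v,\mu\one+d_i}=\sprod{v,d_i}$, so $\mu$ disappears entirely. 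The Donoho--Tanner vertex-survival formula is then applied to the projected, sign-flipped columns $\pm\Pi_{\sprod{\one}^{\perp}}d_i$, which inherit orthant-symmetry and exchangeability from $D$; the only residual issue is general position, settled by a Lebesgue-density argument in the continuous case and by a singularity bound for random $\pm1$ matrices in the Rademacher case. To salvage your augmentation route you would have to prove a Donoho--Tanner-type count robust to one fixed row and one fixed column --- precisely the step you leave open; the projection trick shows that step is unnecessary.
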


\sk{Note that this theorem in particular holds true for $\mu=0$, so that $A$ does not need to be biased.} As for the second of the observations, we \sk{will show} the following result:
	
	\begin{theorem}[Simplified version of Theorem \ref{thm:phaseTrans}]\label{thm:simple2}
	Let $A\in \R^{m,N}$ be a biased measurement matrix as described in \sk{\eqref{BiasedMM} and} \eqref{eq:assumption} with $\mu>0$, and $x_0\in \{0,1\}^N$ a $k$-sparse binary vector. Under the assumption\sg{
\begin{align}\label{eqn:Simplified2}
m \gtrsim \abs{\min(k, N-k)}\log \left(N\right),
\end{align}}
$x_0$ will be the unique solution to \eqref{Pbin} with high probability. In fact, under the same assumption, $x_0$ can be recovered by  instead solving the problem	
\begin{align}
		\min \norm{Ax -b}_2 \text{ subject to } x \in [0,1]^N. \tag{$\mathcal{P}_{LS,bin}$} \label{LSbin}
	\end{align}

\sg{If $m \gtrsim \abs{\min(k, N-k)}\log \left(N\right)$, with a constant larger than that in \eqref{eqn:Simplified2},} the solution $x_*$ of \eqref{LSbin} for $b= Ax_0 + n$ with \sg{$n\in \R^m$ and} $\norm{n}_2 \leq \epsilon$ obeys
\begin{align*}
\norm{x_0-x_*}_2 \leq \sqrt{\frac{9\left(\frac{16\sigma^2}{\mu^2}+ \min(k, N-k)\right)}{m \sigma^2}}\cdot \epsilon.
\end{align*}
\end{theorem}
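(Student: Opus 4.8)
The plan is to reduce all three assertions to a single quantitative injectivity estimate for $A$ on the cone of feasible perturbations at $x_0$, and then to prove that estimate by letting the bias $\mu\one$ absorb the one direction in which the centered part $D$ carries no information. I first dispose of the complementation symmetry. Writing $K=\supp(x_0)$, so that $x_0=\mathds{1}_K$ with $|K|=k$, the substitution $x\mapsto\mathbf 1_N-x$ (with $\mathbf 1_N\in\R^N$ the all-ones vector) maps $[0,1]^N$ onto itself, sends $x_0$ to the $(N-k)$-sparse vector $\mathds{1}_{K^c}$, turns \eqref{LSbin} for the datum $b=Ax_0+n$ into \eqref{LSbin} for the datum $A\mathbf 1_N-b=A(\mathbf 1_N-x_0)-n$, and preserves the error $\norm{x_0-x_*}_2$. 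Hence I may assume $k=\min(k,N-k)\le N/2$ throughout, which is precisely how $\min(k,N-k)$ enters the statement.

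The single estimate I aim for is a restricted minimum-gain bound: with high probability,
\begin{align}\label{eqn:mingain}
\norm{Av}_2 \ge \alpha\,\norm{v}_2 \quad\text{for all } v\in H_K,\qquad \text{where }\ \alpha:=\tfrac{2}{3}\sqrt{\tfrac{m\sigma^2}{16\sigma^2/\mu^2+k}} .
\end{align}
Here $H_K$ is exactly the cone of directions in which $x_0$ may be perturbed while remaining in $[0,1]^N$. Granting \eqref{eqn:mingain}, the three claims follow quickly. Since $H_K\supseteq N^+\cap H_K$, \eqref{eqn:mingain} forces $\ker A\cap N^+\cap H_K=\{0\}$, so \eqref{BNSP} yields uniqueness of $x_0$ for \eqref{Pbin}; injectivity of $A$ on $H_K$ likewise makes $x_0$ the unique minimiser of \eqref{LSbin} when $n=0$. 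For the robust bound, feasibility of $x_0$ and optimality of $x_*$ give $\norm{Ax_*-b}_2\le\norm{Ax_0-b}_2=\norm{n}_2\le\epsilon$, whence $\norm{A(x_*-x_0)}_2\le 2\epsilon$; and $v:=x_*-x_0$ lies in $H_K$ because $x_*\in[0,1]^N$ and $x_0=\mathds{1}_K$. Inserting $v$ into \eqref{eqn:mingain} gives $\norm{x_0-x_*}_2\le 2\epsilon/\alpha$, which is exactly the asserted constant $\sqrt{9(16\sigma^2/\mu^2+\min(k,N-k))/(m\sigma^2)}$.

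It remains to establish \eqref{eqn:mingain}, the heart of the matter. I would split the gain using the orthogonal projection $P=\tfrac1m\mathbf 1_m\mathbf 1_m^\top$ onto the span of $\mathbf 1_m\in\R^m$, writing $\norm{Av}_2^2=\norm{PAv}_2^2+\norm{(I-P)Dv}_2^2$, where the bias survives only in the first term: $\norm{PAv}_2=\sqrt m\,\bigl|\mu\,\mathbf 1_N^\top v+\tfrac1m\mathbf 1_m^\top Dv\bigr|$. The decisive observation is that on $H_K$ the sign pattern of $v$ is fixed, so $\mathbf 1_N^\top v=\norm{v_{K^c}}_1-\norm{v_K}_1$ with $\norm{v_K}_1\le\sqrt k\,\norm{v}_2$. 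This produces a dichotomy: either $\norm{v}_1\lesssim\sqrt k\,\norm{v}_2$, in which case $v$ lies in an $\ell_1$-descent cone of effective sparsity $O(k)$ and a restricted-eigenvalue bound for $D$ gives $\norm{(I-P)Dv}_2\gtrsim\sigma\sqrt m\,\norm{v}_2$; or $\norm{v}_1\gtrsim\sqrt k\,\norm{v}_2$, in which case $\mathbf 1_N^\top v\ge\norm{v}_1-2\sqrt k\,\norm{v}_2$ is bounded below and the bias term $\sqrt m\,\mu\,\mathbf 1_N^\top v$ is large. Balancing the two per-unit signal strengths — $\mu$ along the mean direction and $\sigma$ along the sparse directions — is what produces the effective dimension $16\sigma^2/\mu^2+k$ and the constant $\tfrac23$ in $\alpha$.

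The main obstacle lives entirely in \eqref{eqn:mingain} and is twofold. First, the two regimes must be fused into a single inequality valid for every $v\in H_K$, including the delicate boundary where $\norm{v}_1\approx 2\sqrt k\,\norm{v}_2$ while $\mathbf 1_N^\top v$ is simultaneously small; there a crude case split fails, and one must combine the bias and restricted-eigenvalue estimates quantitatively, which is what pins down the numerical constants. Second, the restricted-eigenvalue bound for $D$ must hold uniformly over the relevant $\ell_1$-cone using only the i.i.d., mean-zero, bounded hypotheses \eqref{eq:assumption}; I would obtain it from a small-ball/Gaussian-width argument, noting that the width of the $\ell_1$-descent cone at a $k$-sparse point is $O(k\log(N/k))$, which is exactly what yields the sample complexity $m\gtrsim\min(k,N-k)\log(N)$ and, after a union bound over the $\binom{N}{k}$ possible supports, the high-probability conclusion. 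Controlling the coupling term $\tfrac1m\mathbf 1_m^\top Dv$ in $\norm{PAv}_2$ and absorbing the rank-one loss incurred when passing between $\norm{Dv}_2$ and $\norm{(I-P)Dv}_2$ are the routine clean-up steps.
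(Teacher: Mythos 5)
Your reduction steps are sound and match the paper's skeleton: restricting to $k=\min(k,N-k)$ by the substitution $x\mapsto \one - x$, and funnelling all three claims through a lower bound on the $H_K$-restricted singular value $\sigma_{H_K}(A)=\min\{\norm{Av}_2 : v\in H_K,\ \norm{v}_2=1\}$, is exactly what the paper does (its Proposition \ref{eq:StabCert} derives $\norm{x_*-x_0}_2\le 2\epsilon/\sigma_{H_K}(A)$ by the same feasibility-plus-optimality argument, and uniqueness for \eqref{Pbin} and \eqref{LSbin} follows from $\ker A\cap H_K=\{0\}$ via Proposition \ref{prop1}). Where you genuinely diverge is in how the bound $\sigma_{H_K}(A)\ge t/s$ is obtained. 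The paper never proves a uniform estimate over the cone $H_K$: it constructs an explicit dual certificate $\nu=\rho\one+D\epsilon_0-m^{-1}\sprod{D\epsilon_0}{\one}\one$ with $\rho\sim -\sigma^2/\mu$, verifies the $N$ scalar conditions $(A^*\nu)_i \lessgtr \pm t$ coordinatewise by Hoeffding's inequality and a union bound (Lemma \ref{lem:certHk}), controls $\norm{\nu}_2$ the same way (Lemma \ref{lem:certNorm}), and then converts the certificate into the singular-value bound via $\sprod{Ax}{\nu}\ge t\norm{x}_1\ge t\norm{x}_2$ for $x\in H_K$. This is elementary and entirely pointwise. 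Your route instead proves the minimum-gain bound directly, splitting $\norm{Av}_2^2=\norm{PAv}_2^2+\norm{(I-P)Dv}_2^2$ along $\sprod{\one_m}$ and running a dichotomy on $\norm{v}_1/\norm{v}_2$: bias dominates when $v$ is spread out, a restricted-eigenvalue bound on $D$ dominates when $v$ is effectively $k$-sparse. That is a legitimate and arguably more geometric explanation of why the effective dimension is $\sigma^2/\mu^2+k$, and it would yield a uniform (over all supports of size $k$) statement essentially for free.

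The gap is that the one step carrying all the difficulty in your route --- the uniform restricted-eigenvalue bound $\norm{(I-P)Dv}_2\gtrsim\sigma\sqrt{m}\norm{v}_2$ over the cone $\{\norm{v}_1\le C\sqrt{k}\norm{v}_2\}$ for a matrix $D$ that is only assumed i.i.d., centered and bounded --- is asserted, not proved, and it is a strictly heavier tool (small-ball/Mendelson or chaining plus a Paley--Zygmund small-ball constant depending on $\Lambda/\sigma$) than anything the paper uses. The constants in your $\alpha=\tfrac{2}{3}\sqrt{m\sigma^2/(16\sigma^2/\mu^2+k)}$ are reverse-engineered from the conclusion rather than derived. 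Two smaller points: the ``delicate boundary'' you worry about at $\norm{v}_1\approx 2\sqrt{k}\norm{v}_2$ dissolves if you simply place the dichotomy threshold at $\norm{v}_1=4\sqrt{k}\norm{v}_2$ (below it the enlarged cone still has width $O(\sqrt{k\log(N/k)})$; above it $\one^{\T}v\ge\norm{v}_1/2$), so no quantitative fusion of the regimes is actually needed; and the union bound over $\binom{N}{k}$ supports is superfluous for the stated nonuniform result, since the effective-sparsity cone does not depend on $K$. If you carry out the RE step, your proof works; as written, it is a correct plan with its central estimate outstanding, whereas the paper's certificate construction closes the same gap with nothing more than Hoeffding.
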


Note that Theorem \ref{thm:phaseTrans} indicates that \eqref{Pbin} will be successful with few measurements both when $x_0$ is sparse, and far from being sparse $(k \approx N)$. An intuitive reason why this could be the case is that if $x_0$ is far from being sparse and binary, $\one - x_0$ will be sparse and binary. Recovering $\one - x_0$ should hence require few measurements. The theorem indicates that the problem \eqref{Pbin}, when the measurement matrix is biased, somehow automatically decides which of the two vectors $x_0$ and $\one - x_0$ should be tried to be recovered. It also shows that the bias of the measurements is crucial --  as $\mu\to 0$, the bound turns into a trivial one.

The fact that \eqref{LSbin} can be used for recovery instead of \eqref{Pbin} could possibly have a practical impact, as the former program is less complex. Also note that in contrast to \eqref{BPBinRob}, the noise level does not need to be known to properly apply \eqref{LSbin}.
	
	\begin{remark} 
	Note that the important case of $A$ being a $0/1$-Bernoulli matrix satisfies the assumptions \eqref{eq:assumption}. \sk{The requirements of} \eqref{eq:assumption}, however, exclude unbounded distributions for $D$, such as the Gaussian distribution. Since, e.g., the Gaussian distributions enjoy concentration inequalities ($\abs{d_{ij}} \leq \Lambda$ with large probability), we can, however, use conditioning to derive statements also for such matrices, that is
	\begin{align*}
		\prb{ A \text{ enjoys property P } } \leq \prb{ \exists \ (i,j) : \abs{d_{ij}} > \Lambda } + \prb{ d_{ij} \in [-\Lambda, \Lambda] \text{ and } A \text{ enjoys property P}}.
	\end{align*}
	\end{remark}
	
	The rest of the paper will be arranged as follows. In Section \ref{sec:Symmetry}, we will present both the open problems described in this section in more detail as well as the idea of the proof of the main result. In Section \ref{sec:Main}, we then present the details concerning the main results.
	
	 \sg{To complete the introduction,} let us present notation which will be used throughout the whole paper. The support of a signal will usually be denoted by $K\subset [N]=\{1,\dots,N\}$, a biased measurement matrix will be denoted by $A\in \R^{m,N}$ and a non-biased measurement matrix by $D\in \R^{m,N}$. \sk{The notation} $\sigma^2=\sigma^2(d)$ will stand for the variance of some random variable $d$. Furthermore, $\sprod{M}$ will denote the the linear hull of a set $M \sse \R^N$, and $\Pi_V$ the orthogonal projection onto the subspace $V$. 
\section{Utilizing the Symmetry of the Ground-Truth Signal}

As has been hinted in the introduction, signals $x_0\in\R^N$ having entries in the alphabet $\{0,1\}$, \sk{enjoy} the special property that also $\one-x_0$ is binary. In the following the goal is to utilize this additional structural property. Before treating the biased measurement matrices and the main results of this paper, let us briefly explore another means of exploiting the symmetry of binary signals. \sg{The following algorithm} was proposed by one of the authors of this article, together with co-authors, in \cite{KKLP}.

\subsection{Mirrored Binary Basis Pursuit with Box Constraints}
In \cite{KKLP}, the following observation was made: In case we knew in advance that $\|\one-x_0\|_0<\|x_0\|_0$, we could run the algorithm
\begin{align}\label{PbinS}\tag{$P_{\text{Mbin}}$}
 \min\|\one-x\|_1 \quad \text{subject to} \quad Ax=b \quad \text{and}\quad x\in [0,1]^N
\end{align}
to recover $x_0$. The authors of \cite{KKLP} proposed to combine \eqref{Pbin} and \eqref{PbinS} to form a new recovery algorithm, \emph{Mirrored Binary Basis Pursuit} (Algorithm \ref{alg:MiBiBP}).

\begin{algorithm}
 \DontPrintSemicolon
 \begin{enumerate}
 \item Solve $\min\|x\|_1 \text{ subject to } Ax=b \text{ and } x\in [0,1]^N$ to obtain a solution $\hat{x}_1$.
 \item Compute $\min\|\one-x\|_1 \text{ subject to } Ax=b \text{ and } \quad x\in [0,1]^N$ to obtain a solution $\hat{x}_2$.
 \item Let $\hat{x}=\argmin_{i=1,2}{\round{\hat{x}_i}-x_i}$, where $\round{x}$ denotes the vector in $\mathbb{Z}^N$ which is closest to $x\in \R^N$.
\end{enumerate}
\caption{Mirrored Binary Basis Pursuit (MiBi-BP)\label{alg:MiBiBP}}
\end{algorithm}

The idea of the algorithm is to first obtain a solution $\hat{x}_1$ of $\eqref{Pbin}$, which would be a good solution if $x_0$ is sparse and \sg{further} to solve \eqref{PbinS} to obtain a solution $\hat{x}_2$, which would be close to $x_0$, if $\one-x_0$ is sparse. The algorithm then chooses the solution which is closest to be\sg{ing} binary-valued. In \cite{KKLP} it has also been shown, that one of the solutions is indeed exactly binary, provided we have \sg{sufficiently many} measurements. However, this algorithm can only succeed in the case that only one of the solutions $\hat{x}_1$ and $\hat{x}_2$ is binary. In \cite{KKLP} the proposed algorithm has been validated numerically (see Figure \ref{fig:MirorredBPNum}). However, a theoretical validation remained open. This theoretical validation is provided by the following theorem:

\begin{figure}
	\centering
	\captionsetup{justification=centering}
	\subfigure[Reconstruction error of MiBi-BP. The experiment is conducted as the experiments for Figure \ref{fig:Pbin} with MiBiBP as reconstruction algorithm.]{\includegraphics[width=0.43\textwidth]{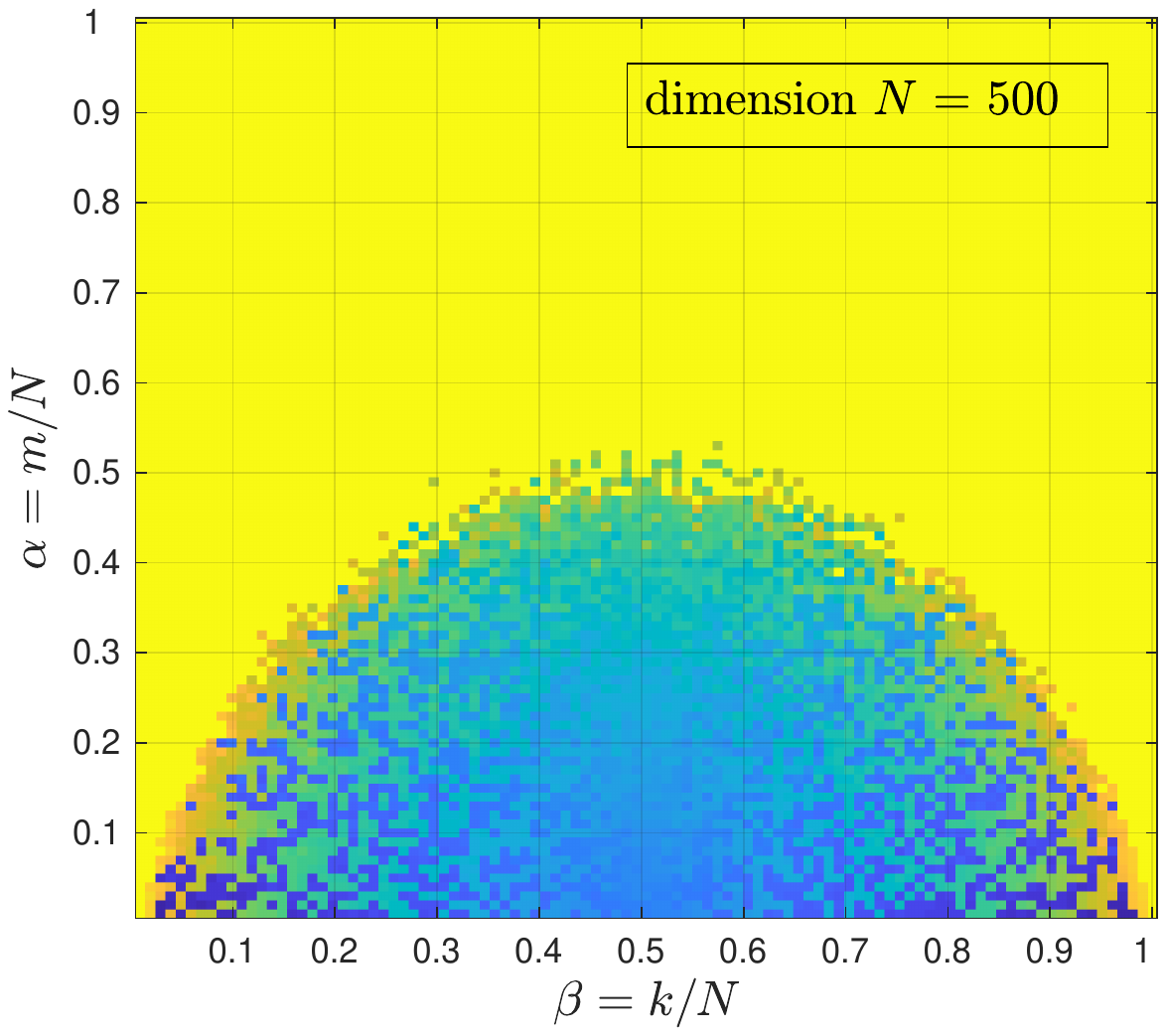}\label{fig:MirorredBPNum}}
	\hspace*{0.5cm}
	\subfigure[Phase transition of MiBi-BP for arbitrary \sk{$N\in\N$. Successful Reconstruction is guaranteed in the area above the curves with high probability.}]{\includegraphics[width=0.47\textwidth]{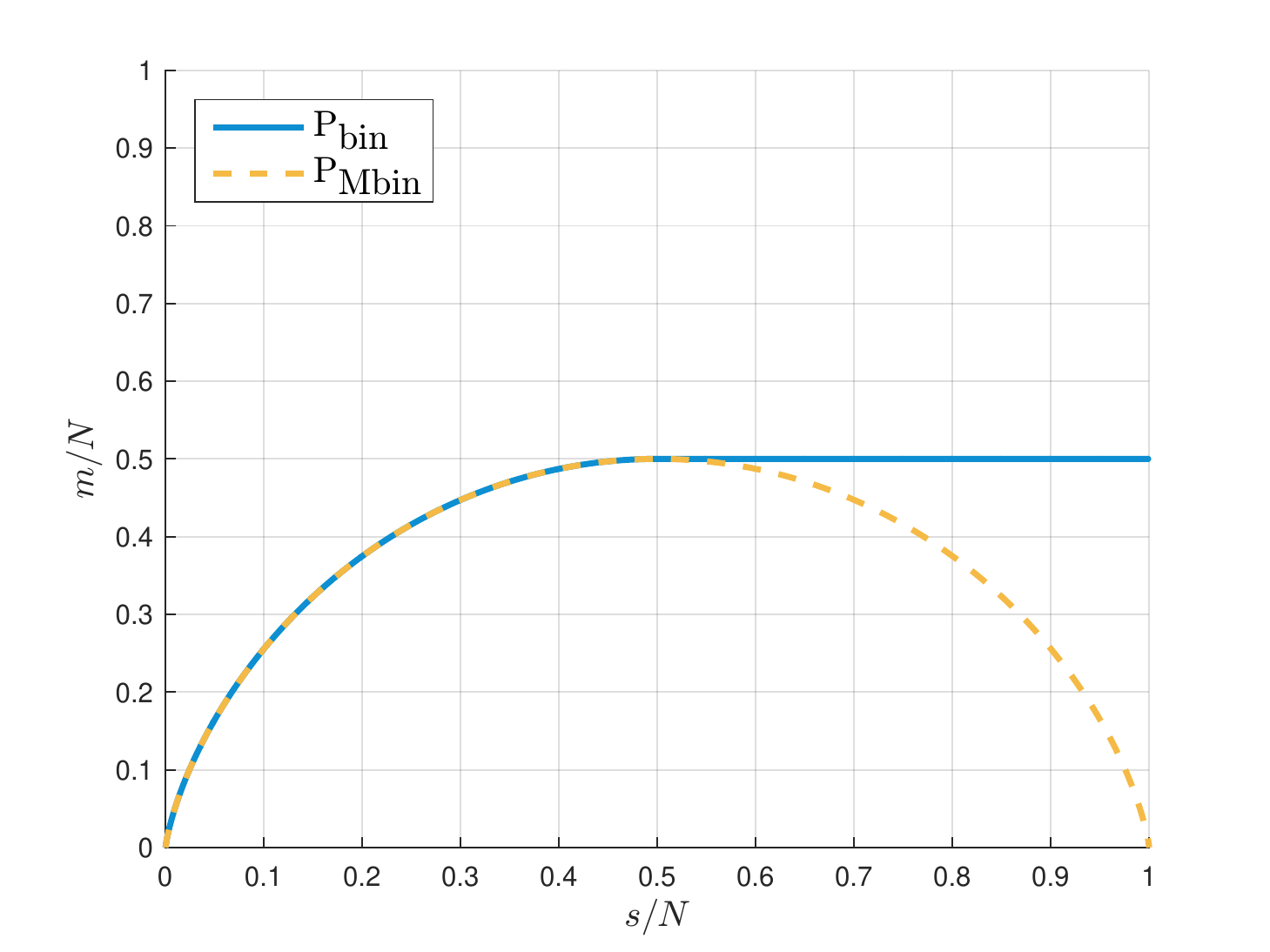}\label{fig:PhaseTransMirr}}
	\caption{Numerically and theoretically derived phase transition of the algorithm MiBiBP.}\label{fig:MirroredBP}
\end{figure}

\begin{theorem}
 Let $A\in \R^{m,N}$ be a Gaussian matrix and $x_0=\one_K$, with $K\subset [N]$ and $|K|\le k$. Further let $\varepsilon>0$ be some fixed tolerance. Provided $$m\ge \min\{\Delta_{\text{bin}}(k),\Delta_{\text{bin}}(N-k)\}+\sqrt{8\log(4/\varepsilon)N},$$
 \sg{where
 \begin{align}\label{eqn:Deltak}
 \Delta_{\text{bin}}(k):=\inf_{\tau\ge0}\left\{k\int_{-\infty}^{\tau}(u-\tau)^2\phi(u)du + (N-k)\int_{\tau}^{\infty}(u-\tau)^2\phi(u)du\right\}
\end{align}
and $\phi(u)\!=\!(2\pi)^{-1/2}e^{-u^2/2}$ being the probability density of the Gaussian distribution,} $x_0$ is the unique solution of MiBi-BP with probability larger than $1-\varepsilon$. 
\end{theorem}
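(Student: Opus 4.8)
The plan is to reduce MiBi-BP to two independent exact-recovery statements for box-constrained basis pursuit --- one for $x_0$ and one for its reflection $\one - x_0$ --- and then to argue that the rounding step in Algorithm \ref{alg:MiBiBP} always returns $x_0$ as soon as one of the two subproblems succeeds.

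First I would rewrite Step 2 as an instance of \eqref{Pbin}. Setting $s := |K|$ and substituting $y = \one - x$, the program \eqref{PbinS} turns into $\min \|y\|_1$ subject to $Ay = A(\one - x_0) = A\one_{K^c}$ and $y \in [0,1]^N$; hence its minimizer satisfies $\hat x_2 = \one - y^\star$, where $y^\star$ is the \eqref{Pbin}-reconstruction of the $(N-s)$-sparse binary vector $\one_{K^c}$. Because $x \mapsto \one - x$ is an affine isometry with linear part $-\Id$, the descent cone of Step 2 at $x_0$ is the reflection through the origin of the descent cone of \eqref{Pbin} at $\one_{K^c}$, and reflection leaves the statistical dimension unchanged.

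Next I would bring in the statistical-dimension machinery. By the computation behind \eqref{eqn:Deltak} in \cite{KKLP}, the statistical dimension of the descent cone of \eqref{Pbin} at an $s$-sparse binary signal is $\Delta_{\text{bin}}(s)$, so by the previous paragraph the cone attached to Step 2 has statistical dimension $\Delta_{\text{bin}}(N-s)$. Since for Gaussian $A$ the kernel is a uniformly random $(N-m)$-dimensional subspace and exact recovery is equivalent to this kernel meeting the relevant descent cone only at the origin, the approximate kinematic formula of \cite{ALMT} shows, for any $\eta \in (0,1)$, that Step 1 returns $\hat x_1 = x_0$ with probability at least $1 - \eta$ once $m \ge \Delta_{\text{bin}}(s) + \sqrt{8 \log(4/\eta) N}$, and Step 2 returns $\hat x_2 = x_0$ with probability at least $1 - \eta$ once $m \ge \Delta_{\text{bin}}(N - s) + \sqrt{8 \log(4/\eta) N}$. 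Taking $\eta = \varepsilon$ and using that $k \mapsto \min\{\Delta_{\text{bin}}(k), \Delta_{\text{bin}}(N-k)\}$ is non-decreasing in $\min(k, N-k)$ to pass from the true support size $s = |K| \le k$ to $k$, the hypothesis on $m$ guarantees that at least one of the two subproblems reconstructs $x_0$ exactly with probability at least $1 - \varepsilon$.

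It remains to control Step 3, and this is the step I expect to be the only genuinely delicate one. If $\hat x_i = x_0$ then its distance to $\Z^N$ vanishes, so it minimizes the selection criterion; the only danger is a tie, namely the other iterate being some binary vector different from $x_0$. To rule this out I would introduce the event $E_3 := \{ x_0 \text{ is the unique binary vector with } Ax = b \}$: for each fixed nonzero $v \in \{-1,0,1\}^N$ the Gaussian vector $Av$ is nonzero almost surely, and a union bound over the finitely many such $v$ shows $\Prob{E_3} = 1$. On $E_3$ every binary feasible point equals $x_0$, so whichever iterate the $\argmin$ selects must be $x_0$. Intersecting the probability-one event $E_3$ with the event that at least one subproblem succeeds then yields the stated bound $1 - \varepsilon$. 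I expect the main work to lie not in the probability estimate --- which is handed to us by \cite{ALMT} --- but in the two structural points above: the exact identification of the Step 2 descent cone as the reflection of the descent cone of \eqref{Pbin} at $\one_{K^c}$, and the exclusion of a spurious binary competitor in the tie-break.
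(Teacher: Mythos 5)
Your proposal follows essentially the same route as the paper: show that (at least) one of the two box-constrained subproblems recovers $x_0$ with probability at least $1-\varepsilon$ via the statistical-dimension/kinematic-formula bound, and then argue that the rounding step cannot prefer a wrong iterate because any competitor would have to be a binary feasible point, which for a Gaussian matrix must coincide with $x_0$ almost surely. The only cosmetic differences are that you derive the Step-2 guarantee by an explicit reflection of the descent cone at $\one_{K^c}$ instead of invoking Theorem 2.7 of \cite{KKLP} for the mirrored program directly, and you establish $\ker(A)\cap(\{0,1\}^N-\{0,1\}^N)=\{0\}$ almost surely by a direct union bound over $v\in\{-1,0,1\}^N$ rather than citing \cite{FK}; both substitutions are sound.
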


 In Figure \ref{fig:Pbin} we provide a plot of the function $\Delta_{\text{bin}}$.

\begin{proof} 
 Without loss of generality, let $k\ge N/2$. Since $m \ge \Delta_{\text{bin}}(n-k)$, Theorem 2.7 of \cite{KKLP} implies that with probability larger $1-\varepsilon$, $x_0$ is the unique solution of \eqref{PbinS}.
 Towards a contradiction, assume that $x_0$ is not the unique solution of MiBi-BP \sg{and} let $\hat{x}\neq x_0$ be the solution of MiBi-BP. Then it needs to hold that $\hat{x}$ is a solution of \ref{Pbin} and $\|\rnd(\hat{x})-\hat{x}\|\le \|\rnd(x_0)-x_0\|=0$, \sg{implying} that $\hat{x}$ is binary. Thus, $x_0-\hat{x}\in \ker(A)\cap (\mathcal{A}-\mathcal{A})$. In \cite{FK} it was shown, that for matrices $A$ with i.i.d. columns from a non-singular distribution, we have $\ker{A}\cap \mathcal{A}-\mathcal{A}=\{0\}$ with probability $1$. This yields $\hat{x}=x_0$, since the Gaussian distribution is non-singular. 
\end{proof}

The value for $\min\{\Delta_{\text{bin}}(k),\Delta_{\text{bin}}(N-k)\}$ and therefore the phase transition for MiBi-BP is illustrated in Figure \ref{fig:PhaseTransMirr}.

The main disadvantage of MiBi-BP is that it, of course, has twice the runtime of standard basis pursuit. In the remainder of the paper, we will be devoted to proving that when using biased measurement matrices, the mirroring procedure is unnecessary.

\subsection{Using Biased Measurement Matrices}\label{subsec:BiasedMatrices}
Let us describe the numerical experiment leading to Figures \ref{fig:Pbin} and \ref{PbinBiased} in more detail.  The ambient dimension $N$ is chosen to be $N=500$. For each combination of sparsity level $k\in \{0.01,0.02, \dots, 1\}\cdot N$ and
number of measurements $m\in \{0.01,0.02, \dots, 1\}\cdot N$, we first draw a Gaussian \sk{$D\in \R^{m,N}$ (for Figure \ref{fig:Pbin}) and Rademacher matrix $\tilde{D}\in \R^{m,N}$} and set $A=\one+\tilde{D}$ (for Figure \ref{PbinBiased}). We further choose some random permutation of the numbers $1,\dots, N$ and set the entries of the vectors $x_0$ which correspond to the first $k$ entries of the permutation to one and all others to zero. We then solve \eqref{Pbin} \sk{for $b=Dx_0$ (Figure \ref{fig:Pbin}) and} for $b=Ax_0$ \sk{(Figure \ref{PbinBiased})}. We repeat the procedure for each combination of sparsity level and number of measurements $25$ times.

To obtain an idea why the phenomenon \sk{we can observe in Figure \ref{PbinBiased}} appears, we also recorded the cases in which either \emph{both} $\one_K$ and $\one-\one_K$ or \emph{none of the two} were recovered by \eqref{Pbin}. As can be seen in Figure \ref{fig:Simultaneous}, this was almost always the case, the only exception being the phase transition region in Figure \ref{PbinBiased}.  For Gaussian matrices, a corresponding experiment reveals that this is true with high probability instead only in the case when the number of measurements $m$ exceeds $N/2$.

These numerical observations suggest that we should try to investigate when a matrix has the property that both $\one_K$ and $\one-\one_K$ is recovered by \eqref{Pbin}. Concerning the simultaneous recovery of different binary sparse signal, the following theorem \sg{holds}. 
%

\begin{figure}[H]
	\centering
\includegraphics[width=0.48\textwidth]{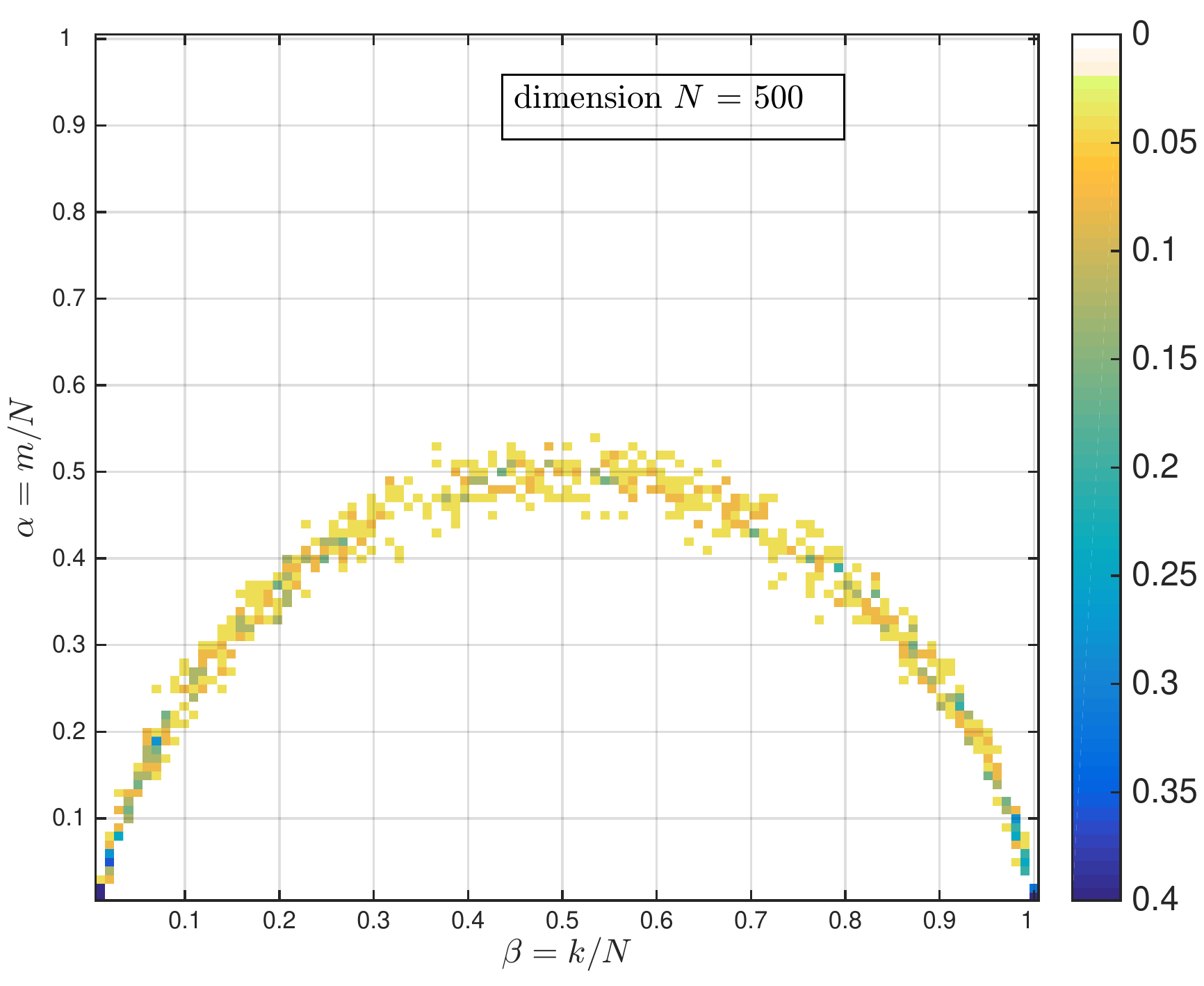}
	\caption{The probability for simultaneous reconstruction, or simultaneuous non-reconstruction, of $\one_K$ and $\one - \one_K$, for biased measurement matrices. \label{fig:Simultaneous}}
\end{figure}

\begin{theorem}\label{thm:newNSP}
 Let $A\in \R^{m,N}$ and $K,S\subset [N]$. Suppose $\one_K$ is the unique solution of \eqref{Pbin} with $b=A\one_K$ and $\one_S$ with $b=A\one_S$. Then 
 \begin{align}
  \ker(A)\cap H_{K} \cap H_{S^c}=\{0\}.
 \end{align}
\end{theorem}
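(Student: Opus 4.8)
The plan is to reduce everything to the null space characterization \eqref{BNSP} quoted above, which I am entitled to apply to \emph{both} index sets: since the hypotheses say $\one_K$ is the unique solution of \eqref{Pbin} for $b=A\one_K$ and $\one_S$ is the unique solution for $b=A\one_S$, the forward direction of \eqref{BNSP} yields the two certificates $\ker A\cap N^+\cap H_K=\{0\}$ and $\ker A\cap N^+\cap H_S=\{0\}$. The whole game is then to feed an arbitrary element of $\ker A\cap H_K\cap H_{S^c}$ into one of these two conditions.

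The key structural fact I would record first is the reflection identity $H_{S^c}=-H_S$. Indeed, by definition $w\in H_{S^c}$ means $w_i\le 0$ for $i\in S^c$ and $w_i\ge 0$ for $i\in S$, which is precisely the statement that $-w\in H_S$. This is the observation that lets the certificate for $S$ act on the problem about $S^c$, and it is really the only idea in the proof.

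With this in hand I would take an arbitrary $w\in\ker A\cap H_K\cap H_{S^c}$ and split on the sign of the coordinate sum $\sum_{i=1}^N w_i$. If $\sum_i w_i\le 0$, then $w\in N^+$, so $w\in\ker A\cap N^+\cap H_K=\{0\}$ and hence $w=0$. Otherwise $\sum_i w_i\ge 0$, so $-w\in N^+$; moreover $-w\in\ker A$ because $\ker A$ is a subspace, and $-w\in H_S$ because $w\in H_{S^c}=-H_S$. Thus $-w\in\ker A\cap N^+\cap H_S=\{0\}$, which again forces $w=0$. Since the half-space $N^+$ and its reflection $-N^+$ together cover all of $\R^N$, the two cases are exhaustive, so every such $w$ is $0$ and $\ker A\cap H_K\cap H_{S^c}=\{0\}$.

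I do not expect a genuine obstacle here: once the identity $H_{S^c}=-H_S$ is noticed, the argument is a one-line case distinction on whether $w$ or $-w$ lies in $N^+$. The only thing to be careful about is the bookkeeping of the inequalities defining $H_K$, $H_S$ and $H_{S^c}$, and the fact that $\ker A$ being a linear subspace is what allows us to pass freely between $w$ and $-w$.
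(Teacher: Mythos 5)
Your proof is correct and follows essentially the same route as the paper: both arguments apply the characterization \eqref{BNSP} to each of $K$ and $S$, and both hinge on the reflection identity $H_{S^c}=-H_S$ to transfer the certificate for $S$ to an element of $H_{S^c}$. The only cosmetic difference is that you phrase it as an exhaustive case split on the sign of $\sum_i w_i$, whereas the paper derives $\sum_i w_i>0$ and $-\sum_i w_i>0$ simultaneously and concludes by contradiction.
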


\begin{proof}
 Let $w$ be a nonzero element in $\ker(A)\cap H_K \cap H_{S^c}$. Then in particular $w\in \ker(A)\cap H_K$. Since $\one_K$ is the unique solution of \eqref{Pbin}, \sg{\eqref{BNSP} with respect to $K$ holds, and thus, $ \sum_{i\in[N]}w_i>0$}. Similarly,  $w\in \ker(A)\cap H_{S^c}$  implies $-w\in \ker(A)\cap H_S$ and consequently, $-\sum_{i\in [N]}w_i>0$ due to B-NSP with respect to $S$. This is a contradiction.
\end{proof}

For $S=K^c$, it turns out that the implication in the previous proposition is in fact an equivalence, and that both conditions are equivalent to
$\{x\in [0,1]^N:Ax=Ax_0\}$, or $\{x\in [0,1]^N:Ax=A((\one-x_0)\}$, being a singleton. The following proposition \sg{makes this statement precise.}

\begin{proposition}\label{prop1} Let $A\in \R^{m,N}$ and \sg{$K\subset [N]$}. Then the following \sg{conditions} are equivalent.
 \begin{enumerate}
  \item[(i)] $\one_K$ and $\one-\one_K=\one_{K^c}$ are the unique solutions of \eqref{Pbin} with $b=A\one_K$ and $b=A\one_{K^c}$, respectively.
  \item[(ii)] $\ker(A)\cap H_K=\{0\}$.
  \item[(iii)] $\{x\in [0,1]^N:Ax=A\one_K\}=\{\one_K\}$.
  \item[(iv)] $\{x\in [0,1]^N:Ax=A\one_{K^c}\}=\{\one_{K^c}\}$.
 \end{enumerate}
\end{proposition}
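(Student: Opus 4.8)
The plan is to prove the three equivalences (ii)$\Leftrightarrow$(iii), (ii)$\Leftrightarrow$(iv), and (i)$\Leftrightarrow$(ii), which together close the chain. The organizing observation I would record first is the symmetry $H_{K^c}=-H_K$, immediate from the definition of $H_K$ since negating a vector swaps the two sign constraints and exchanges the roles of $K$ and $K^c$. Because $\ker(A)$ is a subspace, hence invariant under negation, this gives for free that $\ker(A)\cap H_{K^c}=-\bigl(\ker(A)\cap H_K\bigr)$, so $\ker(A)\cap H_K=\{0\}$ holds if and only if $\ker(A)\cap H_{K^c}=\{0\}$. This single symmetry is what lets the argument treat $K$ and $K^c$ on equal footing throughout.

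For (ii)$\Leftrightarrow$(iii) I would write every feasible point of (iii) as $\one_K+w$ with $w\in\ker(A)$, and note that the box constraint $\one_K+w\in[0,1]^N$ forces exactly the sign conditions defining $H_K$: for $i\in K$ we need $w_i\le 0$, and for $i\in K^c$ we need $w_i\ge 0$ (this is just the statement that $H_K$ is the tangent cone of the cube at the vertex $\one_K$). Hence if (ii) holds then $w\in\ker(A)\cap H_K=\{0\}$ and the set collapses to $\{\one_K\}$. Conversely, if (ii) fails, pick a nonzero $w\in\ker(A)\cap H_K$; since $H_K$ is a cone, $tw$ lies in it for all $t>0$, and for $t$ small enough the magnitude constraints are also met, so $\one_K+tw\in[0,1]^N$ is a second feasible point, contradicting (iii). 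The equivalence (ii)$\Leftrightarrow$(iv) is then just (ii)$\Leftrightarrow$(iii) applied with $K^c$ in place of $K$, combined with the symmetry of the previous paragraph.

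Finally, (i)$\Leftrightarrow$(ii) is where \eqref{BNSP} and the half-space $N^+$ enter. The direction (ii)$\Rightarrow$(i) is immediate, since $\ker(A)\cap N^+\cap H_K\subseteq\ker(A)\cap H_K=\{0\}$ verifies \eqref{BNSP} for $K$, and the symmetric inclusion verifies it for $K^c$, so both $\one_K$ and $\one_{K^c}$ are the unique solutions of \eqref{Pbin}. For (i)$\Rightarrow$(ii) I take $w\in\ker(A)\cap H_K$ and split on the sign of $\sum_i w_i$: if $\sum_i w_i\le 0$ then $w\in N^+$, so \eqref{BNSP} for $K$ forces $w=0$; if $\sum_i w_i>0$ then $-w\in H_{K^c}$ with $\sum_i(-w)_i<0$, so $-w\in\ker(A)\cap N^+\cap H_{K^c}$ and \eqref{BNSP} for $K^c$ forces $w=0$. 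In either case $w=0$, giving (ii).

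The conceptual heart — and the only genuinely nonroutine step — is this sign-splitting argument: requiring recovery of both $\one_K$ \emph{and} its complement $\one_{K^c}$ makes the extra constraint $N^+$ appearing in \eqref{BNSP} redundant, precisely because one of $w$ and $-w$ must land in $N^+$. Everything else is a mechanical unpacking of the vertex/tangent-cone geometry of the cube $[0,1]^N$ at $\one_K$ together with the negation symmetry $H_{K^c}=-H_K$.
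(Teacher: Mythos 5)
Your proof is correct and follows essentially the same route as the paper: the equivalence of (ii) with (iii) and (iv) via the tangent-cone/scaling argument at the vertex $\one_K$, and the equivalence of (i) with (ii) via the sign-split of $\sum_i w_i$ against \eqref{BNSP}, which is exactly the paper's Theorem \ref{thm:newNSP} specialized to $S=K^c$. The only cosmetic difference is that you make the symmetry $H_{K^c}=-H_K$ and the necessity direction of \eqref{BNSP} explicit where the paper calls them ``clear.''
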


\begin{proof} $(i) \Rightarrow (ii)$ is a special case of Theorem \ref{thm:newNSP}.\sk{ $(ii)\Rightarrow (i)$ is clear, since  $\ker(A)\cap H_K=\{0\}$ by symmetry implies $\ker(A)\cap H_{K^c}=\{0\}$ and  $\ker(A)\cap H_{K}\cap N^+=\{0\}$.
	
It remains to prove that $(ii)$ is equivalent to $(iii)$ and $(iv)$.

Let us concentrate on $(ii) \Leftrightarrow (iii)$, since $(ii) \Leftrightarrow (iv)$ is similar. \sg{For this,} let $v\in  \ker(A)\cap H_{K}$ and assume w.l.o.g. $\|v\|_{\infty}\le 1$. Then $x_0+v\in [0,1]^N$ and $A(x_0+v)=Ax_0$, i.e., $x_0+v\in \{x\in [0,1]^N:Ax=Ax_0\}$. Therefore, $(iii)$ implies $v=0$.

 For the other direction} assume $\ker A \cap H_K = \set{0}$ and, towards a contradiction, that there exists \sg{some} $w \in \set{x\in [0,1]^N:Ax=Ax_0}$ not equal to $x_0$. \sg{This implies that $\eta =w -x_0$} lies in the kernel of $A$, and also 
\begin{align*}
	\eta(i) &= w(i) - x_0(i) =  w(i) -1 \leq 0, \quad i \in K \\
	\eta(i) &= w(i) - x_0(i) =  w(i) -0 \geq 0, \quad i \notin K ,
\end{align*} 
 since $w \in [0,1]^N$. Hence, $0 \neq \eta \in \ker A \cap H_K$, which is a contradiction.
 \end{proof}
Note that, due to $(iii)$ of Proposition \ref{prop1}, the property $\ker(A)\cap H_K=\{0\}$ in particular \sg{implies} that there \sg{does not exist another} solution of $Ax=b$ in the box $[0,1]^N$. \sg{Hence,} it is redundant to look for the solution with the smallest $\ell_1$--norm. Thus, if $\ker(A)\cap H_K=\{0\}$ is true,  we could run $\ell_2$-minimization instead of basis pursuit, i.e., the program 
\begin{align}\label{LSbin}
	\min \norm{Ax-b}_2 \quad \text{subject to} \quad x\in [0,1]^N. \tag{$\calP_{LS, bin}$}
\end{align}
 \sg{Numerically this can indeed be observed} as illustrated in Figure \ref{fig:BoxLS}. Note, \sg{that \eqref{LSbin}} has important advantages compared to basis pursuit, in particularly regarding complexity and noisy measurements. \sk{Note that in order to draw Figure \ref{fig:BoxLS} we designed the experiment similar to those before (cf. Figure \ref{fig:MirroredBP}). Hence, for each combination of sparsity level $k\in \{0.01,0.02, \dots, 1\}\cdot N$ and
number of measurements $m\in \{0.01,0.02, \dots, 1\}\cdot N$, $N=500$, we draw a Rademacher matrix $\tilde{D}\in \R^{m,N}$ and set $A=\one+\tilde{D}$. We further choose some random permutation of the numbers $1,\dots, N$ and set the entries of the vectors $x_0$ which correspond to the first $k$ entries of the permutation to one and all others to zero. We then solve \eqref{LSbin} for $b=Ax_0$. We repeat the procedure for each combination of sparsity level and number of measurements $25$ times.}  \newline

 \begin{figure}[H]
	\centering
	\captionsetup{justification=centering}
	\includegraphics[width=0.48\textwidth]{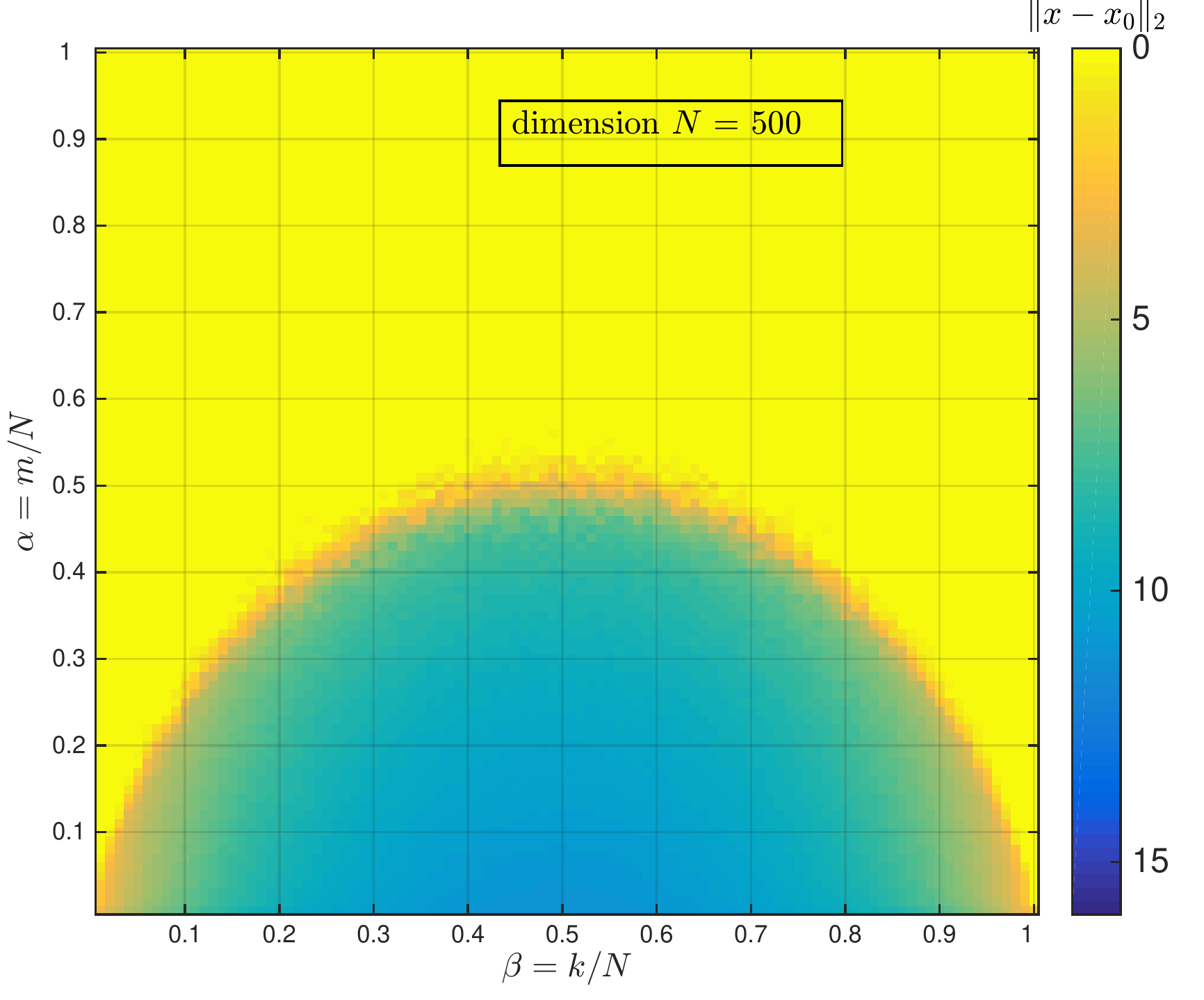}
	\begin{center}\caption{Reconstruction from $0/1$-Bernoulli measurements via \eqref{LSbin}.\\ \footnotesize{The experiment yielding this figure is explained in \sg{the paragraph after Equation \eqref{LSbin}.}} \label{fig:BoxLS}}\end{center}
\end{figure}
 
 By now, the strategy \sg{to prove our main results} should be clear: We should provide theoretical guarantees for $\ker(A)\cap H_K=\{0\}$ in the case that $A$ is a biased matrix of the form \eqref{BiasedMM}. These guarantees will be provided in the next section, in the form of the main results (Theorems \ref{thm:largerN/2} and \ref{thm:phaseTrans}).

 \label{sec:Symmetry}
\section{Main Results}

In this section, we will prove our main results.  We begin by deriving an equivalence between the condition $\ker(A) \cap H_K = \{0\}$ and one which is easier to resolve analytically.

\begin{proposition}\label{prop:HKPlus}
Let $A$ be an (arbitrary) element of $\R^{m,N}$ and $K\subset [N]$. Then the following statements are equivalent:
\begin{enumerate}
 \item[(i)] $\ker(A)\cap H_K=\{0\}$.
 \item[(ii)] $0\notin \mathcal{C}=\chull\left\{\{a_i:i\in K\},\{-a_i:i\notin K\}\right\}$
 \item[(iii)] $ \exists v \in \R^m \text{ such that } A^Tv \in H_K^+$,
 where $H_K^+=\{w\in \R^N: w_i<0 \text{ for } i\in K \text{ and } w_i>0 \text{ for } i\notin K\}$.
\end{enumerate}
\end{proposition}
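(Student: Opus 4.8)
The plan is to prove the cycle of equivalences by establishing (i)$\Leftrightarrow$(ii) through a direct sign-change-and-normalization correspondence between kernel elements and convex coefficients, and (ii)$\Leftrightarrow$(iii) via the separating hyperplane theorem applied to the compact convex set $\mathcal{C}$. Throughout I would write $a_1,\dots,a_N\in\R^m$ for the columns of $A$, so that $\ker(A)=\{w\in\R^N:\sum_{i=1}^N w_i a_i=0\}$ and $(A^T v)_i=\langle a_i,v\rangle$, which lets me translate all three conditions into statements about the $a_i$.

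For (i)$\Leftrightarrow$(ii), the key observation is that the sign pattern defining $H_K$ is exactly matched to the generators of $\mathcal{C}$. Given a nonzero $w\in\ker(A)\cap H_K$, I would set $\mu_i:=|w_i|/\|w\|_1$; since $w_i\le 0$ for $i\in K$ and $w_i\ge 0$ for $i\notin K$, one has $w_i=-\|w\|_1\mu_i$ on $K$ and $w_i=\|w\|_1\mu_i$ on $K^c$. Substituting into $\sum_i w_i a_i=0$ and dividing by $\|w\|_1>0$ yields $\sum_{i\in K}\mu_i a_i+\sum_{i\notin K}\mu_i(-a_i)=0$ with $\mu_i\ge 0$ and $\sum_i\mu_i=1$, i.e.\ $0\in\mathcal{C}$. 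Conversely, convex coefficients witnessing $0\in\mathcal{C}$ can be turned back into a kernel element of $H_K$ by reinstating the signs ($w_i=-\mu_i$ on $K$, $w_i=\mu_i$ on $K^c$), and this $w$ is nonzero because $\sum_i\mu_i=1$. Hence a nonzero element exists on one side precisely when it does on the other, which is (i)$\Leftrightarrow$(ii).

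For (ii)$\Leftrightarrow$(iii), I would first note that $\mathcal{C}$ is the convex hull of finitely many points and therefore compact and convex. If $0\notin\mathcal{C}$, strict separation of a point from a compact convex set produces $v\in\R^m$ with $\langle v,c\rangle>0$ for every $c\in\mathcal{C}$, hence $\langle v,a_i\rangle>0$ for $i\in K$ and $\langle v,a_i\rangle<0$ for $i\notin K$; replacing $v$ by $-v$ gives exactly $A^T v\in H_K^+$. The converse is immediate: if $A^T v\in H_K^+$, then $\langle v,a_i\rangle<0$ for $i\in K$ and $\langle v,a_i\rangle>0$ for $i\notin K$, so $\langle v,\cdot\rangle$ is strictly negative on every generator of $\mathcal{C}$, hence on all of $\mathcal{C}$, and therefore $0\notin\mathcal{C}$. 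The one point to handle with care is the strictness in the separation step: it relies on $\mathcal{C}$ being compact, so that the infimum of the linear functional $\langle v,\cdot\rangle$ over $\mathcal{C}$ is attained and strictly positive, and this is where finiteness of the generating set is genuinely used. Everything else is routine sign bookkeeping, so I expect no substantial obstacle beyond keeping the orientation of $v$ versus $-v$ consistent.
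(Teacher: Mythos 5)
Your proposal is correct and uses essentially the same ingredients as the paper: the sign-flip correspondence between nonzero elements of $\ker(A)\cap H_K$ and convex combinations witnessing $0\in\mathcal{C}$, and strict separation of $0$ from the compact convex set $\mathcal{C}$. The only difference is organizational --- you prove the two biconditionals (i)$\Leftrightarrow$(ii) and (ii)$\Leftrightarrow$(iii) directly, whereas the paper closes a cycle (i)$\Rightarrow$(ii)$\Rightarrow$(iii)$\Rightarrow$(i), with the last step being the short duality computation $0=\langle A^Tv,w\rangle>0$; both are equally valid.
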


Note that the difference between $H_K^+$ and $H_K$ is that we require strict inequalities for $H_K^+$. 

\begin{proof}
We start with proving the implication $(i)\Rightarrow (ii):$ Suppose $0\in \chull\left\{\{a_i:i\in K\},\{-a_i:i\notin K \}\right\}$.  Then there exists a $\lambda\in \R^N_+$ with $0=\sum_{i\in K}\lambda_i a_i-\sum_{i\notin K}\lambda_i a_i$. Thus, the vector $w\in \R^N$  defined through $w_i:=-\lambda_i$ for $i\in K$ and $w_i:=\lambda_i$ for $i\notin K$ fulfills $w\in \ker(A)\cap H_K$. This is a contradiction to the assumption that $\ker(A)\cap H_K$ is trivial.

To prove the implication $(ii)\Rightarrow (iii)$, suppose that $0\notin \mathcal{C}$. Then there exists a separating hyperplane $H_v=\{x\in \R^m:\langle v,x\rangle+c=0\}$, $v\in \R^m$, that strictly separates $0$ and $\mathcal{C}$, say $\langle v,x\rangle+c<0$ for $x\in \mathcal{C}$ and $\langle v,0\rangle+c>0$. Hence, $c>0$ and $\langle v,x\rangle<-c<0$ for $x\in \mathcal{C}$. Thus, due to the definition of $\calC$, \sg{we conclude that} $\langle v,a_i \rangle<0$ for $i\in K$ and  $\langle v,a_i \rangle>0$ for $i\notin K$, which means that $A^Tv\in H_K^+$.

It remains to  prove that $(iii)$ implies $(i)$. To see this, assume that $v \in \R^m$ with $A^Tv \in H_K^+$ exists, but $\ker(A)\cap H_K\neq\{0\}$. Then we find $0\neq w\in \R^N$ with  $w\in \ker(A)\cap H_K$. This implies $$0=v^TAw=(A^Tv)^Tw=\langle A^Tv, w \rangle >0, $$ 
which is a contradiction. In the last step, we utilized that $A^Tw$ and $v$ have the same sign pattern and that $(A^Tw)_i\neq 0 $ for all $i\in [N]$ and $v_i\neq 0$ for at least one index $i\in[N]$. 
\end{proof}

We now move on to prove our main results.

\subsection{\sg{Precise Statement and Proof of Theorem \ref{thm:simple1}}}

The proof of the first of the main results, which applies when the number of measurements exceeds $N/2$, also for unbiased matrices, can by now be dealt with relatively directly.

\begin{theorem}\label{thm:largerN/2}
Let $x_0\in \{0,1\}^N$ be some binary vector, and $A\in \R^{m,N}$  be a random matrix of the form \eqref{BiasedMM}, with the additional assumption that  the probability distribution of $d_{ij}$, $i=1,\dots,m, j=1,\dots,N$, is symmetric. If $m> N/2$, the following holds.

\begin{itemize}
\item[(i)] If \sg{the distribution of $D$} has a density with respect to the Lebesgue measure on $\R^{m,N},$ \sg{the set} $\{x\in [0,1]^N:Ax=Ax_0\}$ is a singleton with a probability larger than $1-P_{N-m,N}$, where
$$P_{N-m,N} :=  2^{-N+1} \sum_{\ell=0}^{N-m-1} \binom{n-1}{\ell}.$$ 
\item[(ii)] If $D$ is a Rademacher matrix, \sg{the set}
$\{x\in [0,1]^N:Ax=Ax_0\}$ is a singleton  with probability larger than $(1-P_{N-m,N})(1-2^{-m/2})$.
\end{itemize}
In particular, in both cases,  both $x_0$ and $\one-x_0$ will be succesfully be recovered by \eqref{Pbin}.
\end{theorem}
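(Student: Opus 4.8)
The plan is to collapse the entire statement onto a single face–survival estimate and then quote \eqref{eqn:survivingFaces}. Set $K=\supp(x_0)$, so $x_0=\one_K$. By Proposition~\ref{prop1} the set $\{x\in[0,1]^N:Ax=Ax_0\}$ is a singleton if and only if $\ker(A)\cap H_K=\{0\}$, and on that same event part (i) of Proposition~\ref{prop1} guarantees that $\one_K=x_0$ and $\one_{K^c}=\one-x_0$ are the \emph{unique} solutions of the two corresponding \eqref{Pbin} problems. Thus the concluding ``in particular'' assertion is free once the probabilistic bound is established, and the whole theorem reduces to estimating $\prb{\ker(A)\cap H_K=\{0\}}$ from below.

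Next I would move $H_K$ into the standard position of the positive orthant. Let $S_K$ be the diagonal $\pm1$ matrix carrying $+1$ on $K$ and $-1$ on $K^c$; then $S_KH_K=-\R^N_+$, whence
\[
\ker(A)\cap H_K=\{0\}\iff \ker(AS_K)\cap\R^N_+=\{0\}.
\]
By Proposition~\ref{prop:HKPlus} applied with empty index set, the right-hand event is exactly $0\notin\chull\{s_ia_i:i\in[N]\}$, i.e.\ the apex $\{0\}$ survives as a $0$-face of $(AS_K)\R^N_+$. This is precisely the $k=0$ instance of the Donoho--Tanner count \eqref{eqn:survivingFaces}, whose value is $1-P_{N-m,\,N}$, so if the hypotheses of \eqref{eqn:survivingFaces} hold for the matrix in hand the bound follows.

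It then remains to check those two hypotheses. \emph{General position}: in case (i) the density of $D$ passes to $A$, so a.s.\ every $m$ columns are independent and $\ker(A)$ is in general position; in case (ii) the matrix is discrete, so instead I would condition on the event $G$ that all $m$-column subsets are independent, run the argument on $G$, and estimate $\prb{G}\ge 1-2^{-m/2}$ by a standard singularity bound for random sign matrices — this is exactly what produces the extra factor $(1-2^{-m/2})$ in (ii). \emph{Orthant symmetry of the nullspace}: since the $d_{ij}$ are symmetric, $DS\sim D$ for every diagonal sign matrix $S$, which gives orthant symmetry immediately when $\mu=0$ (there $A=D$), and both parts of the theorem drop out.

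The genuinely delicate step is the bias. After the reduction the relevant matrix is $AS_K=\mu\one S_K+DS_K$: the centered part is harmless ($DS_K\sim D$), but the rank-one bias becomes $\mu\one S_K$, whose columns carry mixed signs, so $AS_K$ is no longer of the clean form \eqref{BiasedMM} and its nullspace is not literally orthant symmetric once $\mu>0$. This is where I expect the main work to lie, and where I would be most careful not to over-claim. The natural lever is the freedom in the certificate of Proposition~\ref{prop:HKPlus}(iii): restricting $v$ to the hyperplane orthogonal to the all-ones vector in $\R^m$ kills the bias contribution $\mu\langle v,\one\rangle$ and makes the existence of a certificate for $A$ depend only on the symmetric part $D$. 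Combined with the column exchangeability built into \eqref{eq:assumption} and the subspace symmetry $\ker(A)\cap H_K=\{0\}\iff\ker(A)\cap H_{K^c}=\{0\}$, this is the route by which I would try to transfer the orthant-symmetric Donoho--Tanner count from $D$ to $A$; securing the sharp, $\mu$-independent constant $1-P_{N-m,N}$ (rather than a version that loses a dimension) through this transfer is the crux of the proof, whereas the $\mu=0$ case needs none of this machinery.
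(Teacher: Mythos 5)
Your reduction to $\ker(A)\cap H_K=\{0\}$, the sign-flip to the orthant, and the treatment of the unbiased case $\mu=0$ all match the paper, and you have correctly diagnosed the one real obstacle: for $\mu>0$ the nullspace of $AS_K$ is not orthant symmetric, so the Donoho--Tanner count cannot be quoted for $A$ directly. You have even named the right lever --- restricting the dual certificate of Proposition~\ref{prop:HKPlus}(iii) to $\sprod{\one}^\perp$ so that $\langle v,\mu\one\rangle$ vanishes. But you then explicitly stop and declare this transfer ``the crux'' without carrying it out. That is a genuine gap: the biased case is the entire content of the theorem, and as written your argument proves nothing for $\mu>0$.

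The paper closes exactly this gap as follows. It forms the matrix $\widetilde D$ with columns $\tilde d_i=\Pi_{\sprod{\one}^\perp}(-d_i)$ for $i\in K$ and $\tilde d_i=\Pi_{\sprod{\one}^\perp}(d_i)$ for $i\notin K$, regarded as a map into $\sprod{\one}^\perp$. Since the projection commutes with column sign flips and permutations, $\widetilde D$ inherits orthant symmetry and exchangeable columns from $D$ (this is where the symmetry hypothesis on the $d_{ij}$ enters), so \eqref{eqn:survivingFaces} with $k=0$ applies to $\widetilde D$ itself, not to $A$: with probability $1-P_{N-m,N}$ the origin is a vertex of $\cone(\tilde d_i)_{i\in[N]}$. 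Given general position (no $\tilde d_i=0$), being a vertex forces $0\notin\chull(\tilde d_i)_{i\in[N]}$ --- a small but necessary argument you also skip, since ``vertex of the cone'' and ``outside the convex hull of the generators'' are not the same statement --- and a strictly separating hyperplane then yields $v$ with $\langle v,\tilde d_i\rangle>0$ for all $i$; setting $w=\Pi_{\sprod{\one}^\perp}v$ gives $\langle w,a_i\rangle=\langle w,d_i\rangle$ with the sign pattern required by Proposition~\ref{prop:HKPlus}(iii). For case (ii) the general-position event is controlled by showing that each system $\{d_i:i\in L\}\cup\{\one\}$ with $\abs{L}=m-1$ is linearly independent except with probability $\lesssim 2^{-m/2}$ (Corollary~1.2 of the cited singularity bound), which is where the factor $(1-2^{-m/2})$ comes from --- note the sets have size $m-1$, not $m$, precisely because the ambient space after projection is $\sprod{\one}^\perp$. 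Your worry about ``losing a dimension'' in the constant is a fair one to raise, but raising it is not the same as resolving it; a complete proof has to commit to the dimension count for $\widetilde D$ and extract the stated $1-P_{N-m,N}$ (or whatever the projection actually yields) from \eqref{eqn:survivingFaces}.
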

Note, that the case of a (possibly shifted) Gaussian is covered by $(i)$ and the case that $A$ is a $0/1$-Bernoulli matrix by $(ii)$.
\begin{proof}
\sg{
In the following we will argue that with high probability, there will exist a $w \in \sprod{\one}^\perp$
with $\sprod{w, d_i} <0$ for each $i \in K$ and $\sprod{w, d_i} >0$ for $i\notin K$. Since $\sprod{w,d_i} = \sprod{w,a_i}$ for each $i$ for $w \in \sprod{\one}^\perp$, assumption 
$(iii)$ of Proposition \ref{prop:HKPlus} would then be fulfilled and therefore we would have $\ker(A)\cap H_K=\{0\}$.

To this end we define the matrix $\widetilde{D}$, which we will interpret as a linear map from $\R^N$ to $\sprod{\one}^\perp$, formed by concatinating the vectors $(\Pi_{\sprod{\one}^{\perp}}(-d_i))_{i\in K}=: ( \tilde{d}_i)_{i\in K}$ and $(\Pi_{\sprod{\one}^{\perp}}d_i)_{i\notin K}=: ( \tilde{d}_i)_{i\notin K}$. This matrix is ortho-symmetric and has exchangable columns. These properties are inherited from the same properties of the matrix $D$. The latter further follows from the independence and symmetry of the $d_{ij}$ (see \cite{DTHC}, in particular pages 4 and 8).
 
 Now, as long as $\widetilde{D}$ is in general position, we can apply \sg{results from \cite{DT} (cf. \eqref{eqn:survivingFaces})} to conclude 
 \begin{align}
  \prb{0 \text{ is a vertex of } \cone(\tilde{d}_i)_{i\in [N]} }=\prb{\widetilde{D}0 \text{ is a vertex of } \widetilde{D}\R^N_+}=1-P_{N-m,N}.
 \end{align}  
Now $\widetilde{D}\R^N_+$ is a different notation for $\cone{(\tilde{d}_i:i\in [N])}$, and we thus have with probability $1-P_{N-m,N}$ that zero is a vertex of $\cone{(\tilde{d}_i:i\in [N])}$. However, if  $0$ is a vertex of $\cone{(\tilde{d}_i:i\in [N])}$ then we can either conclude $0\notin \chull{(\tilde{d}_i:i\in [N])}$ or that there is $i\in [N]$ with $\tilde{d}_i=0$. To see that this is true, suppose towards a contradiction that $0\in \chull{(\tilde{d}_i:i\in [N])}$ and $\tilde{d}_i\neq 0$, $i\in [N]$. This means that there are $\lambda_i>0$, $i\in [N]$, and $\sum_{i\in [N]}\lambda_i=1$ such that $0=\sum_{i\in [N]}\lambda_i\tilde{d}_i$. This in turn means that at least two of the coefficients, say with out loss of generality $\lambda_1$ and $\lambda_N$, are non-zero, because otherwise one of the $\tilde{d}_i$ must be zero. Thus we have $0=\sum_{i=1}^{N-1}\lambda_i\tilde{d}_i+\lambda_N\tilde{d}_N$, meaning that zero is contained in the line segment between   $\sum_{i=1}^{N-1}\lambda_i\tilde{d}_i$ and $\lambda_N\tilde{d}_N$, which contradicts zero being a vertex. Now, if $\widetilde{D}$ is in general position, non of the $\tilde{d}_i$ can be equal to zero and therefore we have $0\notin \chull{(\tilde{d}_i:i\in [N])}$. 

But if  $0\notin \chull{(\tilde{d}_i:i\in [N])}$, there is a separating hyperplane that strictly separates $0$ and  $\chull{(\tilde{d}_i:i\in [N])}$, i.e., there is $v\in \R^m$ such that $\langle v,\tilde{d}_i\rangle>0$. Hence we have
\begin{align}
0<\langle v,\tilde{d}_i\rangle=\pm \langle v,\Pi_{\sprod{\one}^{\perp}}d_i\rangle=\pm \langle\Pi_{\sprod{\one}^{\perp}} v,d_i\rangle,
\end{align}
thus with $w:=\Pi_{\sprod{\one}^{\perp}} v$ the claim is proven.}
%
%
%
%

It remains to argue that $\tilde{D}$ is in general position. If the
distribution of $D$ has a density with respect to the Lebesgue measure on $\R^N$, 
 $\Pi_{\sprod{\one}^\perp}D$ will also have, and therefore almost surely
 be in general position. This concludes the proof for the case $(1)$.

For the second case, let us note that if for each subset $L$ of indices
 with $\abs{L}=m-1$, \begin{align}
\left\{d_i:i\in L\right\} \cup \{\one\}
 \end{align}
 is a linearly independent system, the system 
 $\left\{\tilde{d}_i:i\in L\right\}$ will also be linearly independent.
 Applying Corollary 1.2 of \cite{BVW}, we however see that
 \begin{align}
 \prb{\left\{d_i:i\in L\right\} \cup \{\one\} \text{ is linearly
 dependent}}\lesssim 2^{-m/2}. \end{align}
 Thus with probability larger than $1-2^{-m/2}$, the columns of
 $\tilde{D}$
 are in general position also in the second case. This concludes the
 proof also for this case.
 
\end{proof}

\begin{remark}
Note that
\begin{align}
 P_{N-(N/2+1),N}=2^{-N+1}\sum_{l=0}^{N/2-1}{N-1 \choose l}\le 2^{-N+1}\sum_{l=0}^{(N-1)/2}{N-1 \choose l}\le 2^{-N+1}2^{-1}2^{(N-1)}=1/2,
\end{align}
and furthermore $P_{N-m,N} \sim 0$ for $m\ll N/2$. Thus, the probabilities described in the previous theorem are larger than $1/2$ (times $ (1-2^{-m})$ in the Rademacher case) for all values of $m>N/2$, and  very close to $1$ for $m \gg N/2$.
\end{remark}

\subsection{Precise Statement and Proof of Theorem \ref{thm:simple2}}
The proof of the second main result is slightly more involved that the one of the first. Let us start by deriving a dual certificate condition which will imply both $\ker A \cap H_K=\{0\}$ \sg{and} stability of the boxed-constrained least squares problem.

\begin{proposition} \label{eq:StabCert}
\sg{Let $s,t, \varepsilon>0$ and $A\in \R^{m,N}$}. Suppose that there exists a dual certificate $A^*\nu \in H_K^t$, where $H_K^t=\{w\in \R^N: w_i<-t \text{ for } i\in K \text{ and } w_i>t \text{ for } i\notin K\}$,
and additionally $\norm{\nu}_2 \leq s$.

Let $x_0=\mathbb{1}_K\in \R^N$ be the binary signal supported on $K$, and $b=Ax_0 + n$ with $\norm{n}_2 \leq \epsilon$. Then the solution $x_*$ of the program \eqref{LSbin} (or of the program \eqref{BPBinRob})
obeys
\begin{align*}
	\norm{x_*-x_0}_2 \leq \frac{2s}{t}\epsilon.
\end{align*}

\end{proposition}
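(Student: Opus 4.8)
The plan is to bound the error vector $h := x_* - x_0$ directly, exploiting that the box constraint forces $h$ to have exactly the sign pattern against which the dual certificate is built. First I would record this sign structure: since $x_* \in [0,1]^N$ while $x_0 = \mathbb{1}_K$, for $i \in K$ we have $h_i = (x_*)_i - 1 \leq 0$, and for $i \notin K$ we have $h_i = (x_*)_i \geq 0$. In other words $h \in H_K$.

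Next I would pair $h$ with the certificate $A^*\nu \in H_K^t$. Entrywise, $(A^*\nu)_i$ and $h_i$ have matching (non-strict) signs and $\abs{(A^*\nu)_i} > t$, so $(A^*\nu)_i h_i \geq t\abs{h_i}$ for every $i$; summing yields
\begin{align*}
	\sprod{A^*\nu, h} \;\geq\; t\sum_{i=1}^N \abs{h_i} \;=\; t\norm{h}_1 \;\geq\; t\norm{h}_2 .
\end{align*}
The same quantity can be read on the measurement side as $\sprod{A^*\nu, h} = \sprod{\nu, Ah}$, and using $b = Ax_0 + n$ I would split $Ah = A(x_* - x_0) = (Ax_* - b) + n$. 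Two applications of Cauchy--Schwarz together with $\norm{\nu}_2 \leq s$ then give $\sprod{\nu, Ah} \leq s\norm{Ax_* - b}_2 + s\norm{n}_2$.

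The one ingredient that still needs an argument is the residual bound $\norm{Ax_* - b}_2 \leq \epsilon$. For \eqref{LSbin} this is where optimality enters: $x_0$ is feasible (it lies in $[0,1]^N$) with $\norm{Ax_0 - b}_2 = \norm{n}_2 \leq \epsilon$, so the minimizer $x_*$ cannot do worse, i.e.\ $\norm{Ax_* - b}_2 \leq \epsilon$; for \eqref{BPBinRob} the same bound is simply the feasibility constraint (taking $\eta = \epsilon$). Combining everything with $\norm{n}_2 \leq \epsilon$ gives $t\norm{h}_2 \leq 2s\epsilon$, which is the claimed estimate. I do not anticipate a genuine obstacle here, since the proof is a short chain of elementary inequalities; the only step requiring attention is this residual bound, where for \eqref{LSbin} one must explicitly invoke that $x_0$ is an admissible competitor for the least-squares objective.
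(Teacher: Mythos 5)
Your proof is correct and follows essentially the same route as the paper: both rest on the observation that $x_*-x_0\in H_K$, the pairing $\sprod{A^*\nu, x_*-x_0}\geq t\norm{x_*-x_0}_1\geq t\norm{x_*-x_0}_2$, Cauchy--Schwarz with $\norm{\nu}_2\leq s$, and the optimality of $x_*$ to bound the residual by $\epsilon$. The paper merely packages the first half of this chain as a lower bound $t/s$ on the $H_K$-restricted singular value of $A$ before combining it with the same residual estimate.
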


\begin{proof}
	Let us first bound the \emph{$H_K$-restricted singular value}, which was introduced in \cite{AL}, of $A$. We have
	\begin{align*}
		\sigma_{H_K}(A) &= \min_{ x\in H_K , \norm{x}_2 =1} \norm{Ax}_2  = \min_{ x\in H_K , \norm{x}_2 =1} \sup_{\norm{p}_2 \leq 1} \sprod{Ax,p} \geq \min_{ x\in H_K , \norm{x}_2 =1} \sprod{Ax,\norm{\nu}_2^{-1} \nu}\\
		&\geq  \min_{ x\in H_K, \norm{x}_2 =1} s^{-1} \sum_{i=1}^{m} x_i (A^*\nu)_i  \geq \frac{t}{s} \min_{ x\in H_K, \norm{x}_2 =1} \norm{x}_1 \geq \frac{t}{s}>0.
	\end{align*}
	This readily implies that $\ker A \cap H_K = \set{0}$, so that, by Proposition \ref{prop1}, the solutions of \eqref{LSbin} and \eqref{BPBinRob} \sg{coincide}.
	
	Now we convert the bound on the restricted singular value into the error bound. Notice that due to $\norm{x_*}_\infty \leq 1$, $x_*\in [0,1]^N$ and the support assumption on $x_0$, we will have  $x_*-x_0 \in H_K$. Consequently,
	\begin{align*}
		\sigma_{H_K} (A) \norm{x_*-x_0}_2 \leq \norm{Ax_*-A x_0}_2 \leq \norm{Ax_* -b}_2 + \norm{Ax_0 -b}_2 \leq 2 \norm{Ax_0 -b}_2 \leq 2\epsilon,
	\end{align*}
	where we used the optimality of $x_{*}$  in the third step. We conclude
	\begin{align*}
		\norm{x_*-x_0}_2 \leq \frac{2\varepsilon}{\sigma(H_K)} \leq \frac{2s}{t}\epsilon.
	\end{align*}
\end{proof}

Let us now define the certificate we will work with in the following. For a sparse binary signal $\epsilon_0$ and a parameter $\rho \sg{=-\mu\sigma^2/4}$, \sg{where $\mu$ and $\sigma$ are specified by the measurement matrix $A$ (cf. Equations \eqref{BiasedMM} and \eqref{eq:assumption})}, we set
\begin{align} \label{eq:dualCertificate}
	\nu = \rho \one + D\epsilon_0 - m^{-1}\sprod{D\epsilon_0,\one} \one.
\end{align}

Let us begin by proving that, \sg{for a suitable value of $t>0$, we have} $A^* \nu \in H_{\supp \epsilon_0}^t$ with high probability.

\begin{lemma} \label{lem:certHk}
	Let $A\in \R^{m,N}$ be a biased measurement matrix of the form \eqref{BiasedMM} with $\erw{d_{ij}^2}=\sigma^2$ and $d_{ij}\in [-\Lambda, \Lambda]$, for $i\in [m], j\in [N]$. Further, let $\nu\in \R^m$ be defined as in \eqref{eq:dualCertificate} \sg{with $\rho= -\mu^{-1}\sigma^2/4$}.
	Under the assumption that $$m \gtrsim \max\left(\frac{\Lambda^2}{\mu^2}, \abs{J} \frac{\Lambda^4}{\sigma^4}\right)\log \left(\frac{N}{\varepsilon}\right),$$ \sg{we have} $A^* \nu \in H_{\supp \epsilon_0}^{m\sigma^2/36}$  with a probability larger than $1-\varepsilon$.
\end{lemma}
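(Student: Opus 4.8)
The plan is to check the sign pattern of $A^*\nu$ one coordinate at a time, splitting each entry into its mean and a random fluctuation, and then to show that the prescribed value of $\rho$ separates the means by a gap of order $m\sigma^2$ while the fluctuations stay below the threshold $t=m\sigma^2/36$ with high probability.

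First I would make the certificate explicit. Since the last two terms of \eqref{eq:dualCertificate} are exactly $\Pi_{\sprod{\one}^\perp}(D\epsilon_0)$, we have $\nu=\rho\one+\Pi_{\sprod{\one}^\perp}(D\epsilon_0)$, and hence $\sprod{\one,\nu}=\rho m$. Writing $d_j\in\R^m$ for the $j$-th column of $D$ and using $A=\mu\one+D$, this yields
\begin{align*}
(A^*\nu)_j=\mu\rho m+\rho\sprod{d_j,\one}+\sum_{l\in J}\sprod{\Pi_{\sprod{\one}^\perp}d_j,\,d_l},\qquad J:=\supp\epsilon_0.
\end{align*}
Taking expectations, the independence of distinct columns and $\erw{d_{ij}}=0$ kill every cross term, and only the diagonal summand $l=j$ survives when $j\in J$; its expectation is $\erw{\norm{\Pi_{\sprod{\one}^\perp}d_j}_2^2}=(m-1)\sigma^2$. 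Thus $\erw{(A^*\nu)_j}=\mu\rho m$ for $j\notin J$ and $\erw{(A^*\nu)_j}=\mu\rho m+(m-1)\sigma^2$ for $j\in J$. Substituting $\rho=-\mu^{-1}\sigma^2/4$ gives $\mu\rho m=-m\sigma^2/4$, so the two clusters of means sit near $-\tfrac14 m\sigma^2$ and $+\tfrac34 m\sigma^2$, on opposite sides of $0$ and each at distance of order $m\sigma^2$ from the band $[-t,t]$. Up to the overall sign, which is immaterial by the symmetry $H_K=-H_{K^c}$, this is exactly the sign pattern required for $A^*\nu\in H^t_{\supp\epsilon_0}$.

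The bulk of the work is then to bound the fluctuation $(A^*\nu)_j-\erw{(A^*\nu)_j}$ below $t$ for all $j$ simultaneously, which I would do by treating the three kinds of summand separately. The linear term $\rho\sprod{d_j,\one}=\rho\sum_i d_{ij}$ is a sum of $m$ bounded, independent, centered variables, so Hoeffding gives $\abs{\rho\sum_i d_{ij}}\lesssim\tfrac{\sigma^2}{\mu}\Lambda\sqrt{m\log(N/\varepsilon)}$ with probability $1-\varepsilon/N$; demanding this be $\ll m\sigma^2$ produces precisely the first term $m\gtrsim\tfrac{\Lambda^2}{\mu^2}\log(N/\varepsilon)$. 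The diagonal term $\norm{\Pi_{\sprod{\one}^\perp}d_j}_2^2-(m-1)\sigma^2$ is again a sum of $m$ independent bounded variables and is controlled by Bernstein's inequality. The delicate terms are the off-diagonal contributions $\sum_{l\in J,\,l\neq j}\sprod{\Pi_{\sprod{\one}^\perp}d_j,d_l}$, which are quadratic in the $d_{ij}$: each inner product $\sum_i d_{ij}d_{il}$ is a sum of $m$ centered products bounded by $\Lambda^2$ with variance $\sigma^4$, and Bernstein keeps such a sum in its variance-dominated regime, at scale $\sigma^2\sqrt{m\log(N/\varepsilon)}$, only once $m\gtrsim\tfrac{\Lambda^4}{\sigma^4}\log(N/\varepsilon)$. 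Aggregating the $\abs{J}$ such products, together with the projection correction $m^{-1}\sprod{D\epsilon_0,\one}\sprod{d_j,\one}$, then forces the second term $m\gtrsim\abs{J}\tfrac{\Lambda^4}{\sigma^4}\log(N/\varepsilon)$. A union bound over the $N$ coordinates and the $O(1)$ auxiliary events closes the estimate at failure probability $\varepsilon$.

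The main obstacle is exactly these quadratic off-diagonal terms. They are not sums of independent summands, so Hoeffding does not apply and a sub-exponential (Bernstein- or Hanson--Wright-type) bound is needed; moreover the crossover between the sub-Gaussian and sub-exponential regimes of Bernstein is what dictates the $\Lambda^4/\sigma^4$ scaling. A further subtlety is that the projection $\Pi_{\sprod{\one}^\perp}$ couples $d_j$ to the column sums, and in the correction term the two factors become dependent when $j\in J$, so one must isolate the diagonal $l=j$ contribution and condition on the remaining columns in order to decouple them and read off the correct variance proxy.
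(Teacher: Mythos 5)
Your proposal follows essentially the same route as the paper: expand $\sprod{\nu, Ae_i}$ into the deterministic term $\mu\rho m$ plus linear and quadratic fluctuations, observe that the choice $\rho=-\mu^{-1}\sigma^2/4$ separates the means on $J$ and $J^c$ by a gap of order $m\sigma^2$, and finish with concentration and a union bound over the $N$ coordinates; the only difference is that you use Bernstein for the quadratic cross terms where the paper applies Hoeffding twice (i.e., after conditioning on the column $d_i$), and both yield the stated sample complexity. The sign issue you flag (the certificate as written lands in $-H_J^t$ rather than $H_J^t$) is present in the paper's own proof as well and is resolved downstream by passing to $\pm\nu$ in the proof of Theorem \ref{thm:phaseTrans}, so it is not a gap in your argument.
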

\begin{proof}
Denote $J= \supp \epsilon_0$. We need to ensure that
\begin{align*}
	\sprod{\nu, Ae_i} &> t , \quad i\in J\\
	\sprod{\nu, Ae_i} &< t,\quad i \notin J \sg{.}
\end{align*} 

\sg{First observe that,} for $i$ arbitrary, \sg{we obtain}
\begin{align*}
	 \sprod{\nu, A e_i} & = \sprod{\rho \one + D\epsilon_0- m^{-1} \sprod{D\epsilon_0, \one}\one, \mu \one + De_i} = \rho \mu m + \rho \sprod{\one, De_i} +\sprod{D\epsilon_0, De_i}- m^{-1} \sprod{D\epsilon_0,\one}\sprod{\one, De_i}\\&\sg{=:\rho \mu m+\rho X_1(i)+X_2(i)+m^{-1}X_3(i)}.
\end{align*} 
Now let us investigate each of the probabilistic terms \sg{$X_1(i),X_2(i)$ and $X_3(i)$} for \sg{the} cases $i \in J$ and $i \notin J$ separately.

For $i\notin J$ and $\sprod{\one, De_i}$ we have
\begin{align*}
	X_1(i)= \sprod{\one, De_i} = \sum_{\ell=1}^m d_{i\ell}.
\end{align*}
This is a sum of $m$ bounded, centered random variables. Hoeffding's inequality \cite[Theorem 7.20]{FR} implies that this variable is smaller than $-\theta$ only with a probability smaller than
\begin{align*}
	 \exp \left( - \frac{2\theta^2}{4m\Lambda^2}\right).
\end{align*}
Similarly, we may write the other terms as sums of $km$ (and $km^2$, respectively) centered, in $[-\Lambda^2, \Lambda^2]$ bounded random variables. For instance,\sg{we have}
\begin{align*}
\sg{X_2(i)=}\sprod{D\epsilon_0, De_i} = \sum_{j \in J} \sum_{\ell=1}^m d_{j\ell} d_{i\ell},
\end{align*}
and \sg{we can} similarly \sg{compute} $\sg{X_3(i)}=\sprod{D\epsilon_0, \one}\sprod{\one, D e_i}$. By applying Hoeffding's inequality twice, subsequently a union bound, and choosing the $\theta$-parameters equal to $\mu m /4$, $\rho \mu m /4$ and $\rho m^2 \mu/4$ respectively, we obtain that, for all $i \notin J$,
\begin{align*}
	  \sprod{\nu, Ae_i} \leq \frac{\rho \mu m}{4}
\end{align*}
with a failure probability at most
\begin{align*}
	 (n-\abs{J})\left( \exp \left( - \frac{ m\mu^2}{32\Lambda^2}\right) + \exp \left( - \frac{m\rho^2 \mu^2 }{32\abs{J}\Lambda^4}\right)+ \exp \left( - \frac{m^2\rho^2  \mu^2}{32\abs{J}\Lambda^4}\right) \right).
\end{align*}


The arguments for the case $i \in J$ are very similar, with the only exception that the variables $\sprod{D\epsilon_0, De_i}$ and $\sprod{D\epsilon_0, \one}\sprod{\one, D e_i}$ are not centered, but rather
\begin{align*}
	\erw{ \sprod{D\epsilon_0, De_i}} = \sum_{j \in J} \sum_{\ell=1}^m \erw{d_{j\ell} d_{i\ell}} = \sum_{j \in J} \sum_{\ell=1}^m \delta_{ij}\erw{d_{ij}^2} = m \sigma^2
\end{align*}
and similarly $\erw{\sprod{D\epsilon_0, \one}\sprod{\one, D e_i}} = m \sigma^2$.

%
%
By applying Hoeffding's inequality, setting the $\theta$-parameters equal to $\mu m$, $\rho \mu m$ and $\rho \mu m^2$, respectively, and a union bound, we obtain
\begin{align*}
 \sprod{\nu, Ae_i} \geq \rho \mu m + \rho \mu m + m \sigma^2 +\rho \mu m - m^{-1}(m \sigma^2 - m^2 \rho \mu) = m( \sigma^2(1-m^{-1}) + 4 \rho \mu ), \quad i \in J
\end{align*}
with a total failure probability no larger than
\begin{align*}
|J|\left( \exp\left(-\frac{m\mu^2}{2\Lambda^2}\right) + \exp\left(-\frac{m\rho^2\mu^2}{ 2\abs{J} \Lambda^4}\right)+  \exp\left(-\frac{m^2 \rho^2\mu^2}{ 2\abs{J}\Lambda^4}\right)\right). 
\end{align*}

\sg{Now with $\rho = -\mu^{-1}\sigma^2/8$ and using the assumption $m \geq 3$,  we} obtain
\begin{align*}
 \sprod{\nu, Ae_i} &\leq -\frac{m\sigma^2 }{32}, \quad i \notin J, \\
\sprod{\nu, Ae_i} &  \geq m( \sigma^2(1/2 - m^{-1}) \geq m \sigma^2/6,  i \in J, 
\end{align*}
with a total failure probability no larger than
\begin{align*}
n\left( \exp \left( - \frac{ m\mu^2}{32\Lambda^2}\right) + \exp \left( - \frac{m\sigma^4 }{2048\abs{J}\Lambda ^4}\right)+ \exp \left( - \frac{m^2\sigma^4}{2048\abs{J}\Lambda^4}\right) \right).
\end{align*}
\sg{Notice that this probability is smaller} than $\varepsilon$, provided
\begin{align*}
	m \gtrsim \max\left(\frac{\Lambda^2}{\mu^2}, \abs{J} \frac{\Lambda^4}{\sigma^4}\right)\log \left(\frac{n}{\varepsilon}\right).
\end{align*}
\end{proof}

The former lemma shows that the vector $\nu$ defined in \eqref{eq:dualCertificate} fulfills the first assumption of Proposition \ref{eq:StabCert}. We still have to control the norm of the vector $\nu$. For this, we prove the following lemma.

\begin{lemma} \label{lem:certNorm}	Let $A\in \R^{m,N}$ be a biased measurement matrix of the form \eqref{BiasedMM} with $\erw{d_{ij}^2}=\sigma^2$ and $d_{ij}\in [-\Lambda, \Lambda]$, for $i\in [m], j\in [N]$. Further, let $\nu\in \R^m$ be defined as in \eqref{eq:dualCertificate} and \sg{$\varepsilon>0$}.
	Under the assumption $$m \gtrsim \left(\frac{\Lambda}{\sigma} \log(\varepsilon^{-1})^\frac{3}{4}\right)^{\frac{4}{3}},$$ \sg{we have}  $\norm{\nu}_2 \leq m(\rho^2 + \sigma^2 (\abs{J}+ 1/\abs{J}))$ with a probability larger than $1-\varepsilon$.
\end{lemma}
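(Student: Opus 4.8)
The plan is to exploit an orthogonal decomposition of $\nu$ that is hidden in the definition \eqref{eq:dualCertificate}. Writing $g := D\epsilon_0$ and observing that $g - m^{-1}\sprod{g,\one}\one = \Pi_{\sprod{\one}^{\perp}} g$, the certificate reads $\nu = \rho\one + \Pi_{\sprod{\one}^{\perp}} g$, i.e.\ a vector in $\sprod{\one}$ plus a vector in $\sprod{\one}^{\perp}$. By the Pythagorean theorem this gives, exactly,
\[
  \norm{\nu}_2^2 = \rho^2\norm{\one}_2^2 + \norm{\Pi_{\sprod{\one}^{\perp}}g}_2^2 = m\rho^2 + \norm{\Pi_{\sprod{\one}^{\perp}}(D\epsilon_0)}_2^2 \leq m\rho^2 + \norm{D\epsilon_0}_2^2 .
\]
This produces the deterministic term $m\rho^2$ of the claimed bound for free and reduces the whole lemma to an upper estimate of the random quantity $\norm{D\epsilon_0}_2^2$.

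Second, I would compute its mean and set up the concentration. With $J=\supp\epsilon_0$ and $S_\ell := (D\epsilon_0)_\ell = \sum_{j\in J} d_{j\ell}$ the $\ell$-th coordinate, the $S_\ell$ are independent across $\ell=1,\dots,m$, each a sum of $\abs{J}$ independent, centered, $[-\Lambda,\Lambda]$-bounded variables. Hence $\norm{D\epsilon_0}_2^2 = \sum_{\ell=1}^m S_\ell^2$ is a sum of $m$ independent nonnegative bounded summands with $\erw{S_\ell^2}=\abs{J}\sigma^2$, so $\erw{\norm{D\epsilon_0}_2^2}=m\abs{J}\sigma^2$, which is precisely the leading term $m\sigma^2\abs{J}$ of the target. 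The additive slack $m\sigma^2/\abs{J}$ is then exactly what must be paid for the fluctuation of $\sum_\ell S_\ell^2$ around this mean.

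Third, I would control that fluctuation with a Bernstein-type inequality for the independent summands $S_\ell^2-\erw{S_\ell^2}$. The two parameters it needs are the range $S_\ell^2\le\abs{J}^2\Lambda^2$ and the variance $\erw{S_\ell^4}-(\abs{J}\sigma^2)^2$, which I would estimate from $\erw{d_{j\ell}^4}\le\Lambda^2\erw{d_{j\ell}^2}=\Lambda^2\sigma^2$ together with the standard fourth-moment expansion of a sum of independent centered variables, $\erw{S_\ell^4}\le\abs{J}\Lambda^2\sigma^2+3\abs{J}^2\sigma^4$. Inserting these, choosing the deviation so that the resulting upper bound equals $m\sigma^2(\abs{J}+\abs{J}^{-1})$, and turning the Bernstein exponent into a condition on $m$ yields the stated sample-complexity; adding back the deterministic $m\rho^2$ term then closes the argument.

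The hard part will be this third step. Because each summand $S_\ell^2$ is only bounded by $\abs{J}^2\Lambda^2$ while its typical size is of order $\abs{J}\sigma^2$, a crude Hoeffding estimate using the range alone is far too lossy and introduces spurious powers of $\abs{J}$ and of $\Lambda/\sigma$. The delicate point is therefore to balance the variance-driven (sub-Gaussian) and range-driven (sub-exponential) regimes of Bernstein's inequality against the small admissible slack so that the dependence on $\abs{J}$ cancels and only the clean factor $(\Lambda/\sigma)^{4/3}\log(\varepsilon^{-1})$ survives; making the exponents collapse to the advertised power $4/3$ is where essentially all the care is needed, the rest being the bookkeeping of constants.
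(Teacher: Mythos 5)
Your first two steps coincide with the paper's: the cross term $2\rho\sprod{\one,\,D\epsilon_0-m^{-1}\sprod{D\epsilon_0,\one}\one}$ vanishes, so $\norm{\nu}_2^2=m\rho^2+\norm{\Pi_{\sprod{\one}^{\perp}}D\epsilon_0}_2^2\le m\rho^2+\sprod{D\epsilon_0,D\epsilon_0}$, and $\erw{\sprod{D\epsilon_0,D\epsilon_0}}=m\abs{J}\sigma^2$. Your third step departs from the paper, which applies Hoeffding directly to the double sum $\sum_{j,k\in J}\sum_{\ell}d_{j\ell}d_{k\ell}$ as if it were a sum of $\abs{J}^2m$ independent bounded terms; your regrouping into the $m$ genuinely independent variables $S_\ell^2$ is the more honest setup, since those products are not independent across pairs $(j,k)$ sharing an index $\ell$.

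The gap is precisely in the step you flag as hard: the balancing you hope for cannot happen, because the deviation budget $t=m\sigma^2/\abs{J}$ is too small relative to the intrinsic fluctuation of $\sum_\ell S_\ell^2$. One has $\operatorname{Var}(S_\ell^2)=\abs{J}\bigl(\erw{d^4}-3\sigma^4\bigr)+2\abs{J}^2\sigma^4\ge 2\abs{J}(\abs{J}-1)\sigma^4$, so the standard deviation of $\sum_\ell S_\ell^2$ is at least of order $\sqrt{m}\,\abs{J}\sigma^2$, and the budget $m\sigma^2/\abs{J}$ falls below one standard deviation whenever $m\lesssim\abs{J}^4$; by the central limit theorem the event $\sum_\ell S_\ell^2\le m\abs{J}\sigma^2+m\sigma^2/\abs{J}$ then fails with probability bounded away from zero, so no concentration inequality can certify it under a condition on $m$ that is free of $\abs{J}$. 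Running Bernstein with your own parameters confirms this: the exponent is of order
\begin{align*}
\frac{(m\sigma^2/\abs{J})^2}{m\bigl(\abs{J}\Lambda^2\sigma^2+\abs{J}^2\sigma^4\bigr)+\abs{J}^2\Lambda^2\cdot m\sigma^2/\abs{J}}\;\asymp\;\frac{m}{\abs{J}^3\Lambda^2/\sigma^2+\abs{J}^4},
\end{align*}
i.e.\ the requirement becomes $m\gtrsim(\abs{J}^3\Lambda^2/\sigma^2+\abs{J}^4)\log(\varepsilon^{-1})$ rather than the advertised $(\Lambda/\sigma)^{4/3}\log(\varepsilon^{-1})$; the powers of $\abs{J}$ do not cancel. (The paper's own proof reaches the $\abs{J}$-free condition only because its Hoeffding exponent is mis-normalized at exactly this point, with the number of summands appearing in the numerator instead of the denominator; a repairable version must either let $m$ grow with $\abs{J}$ or enlarge the slack from $m\sigma^2/\abs{J}$ to something proportional to the mean, say $m\abs{J}\sigma^2$, which is harmless for the use made of this lemma in Theorem \ref{thm:phaseTrans} but still leaves a polynomial dependence on $\abs{J}$ in the condition on $m$.)
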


\begin{proof}
	\sg{First, observe that}
	\begin{align*}
		\norm{\nu}_2^2 = m\rho^2 + \sprod{D\epsilon_0, D\epsilon_0} - m^{-1} \abs{\sprod{D\epsilon_0, \one}}^2 \leq m \rho^2 + \sprod{D\epsilon_0,D\epsilon_0}
	\end{align*}
		The term $m \rho^2$ is constant and must therefore not be subject to any  probabilistic considerations. As for the term \sg{$\sprod{D\epsilon_0,D\epsilon_0}$}, we calculate
		\begin{align*}
			\sprod{D \epsilon_0, D\epsilon_0} = \sum_{j,k \in J} \sprod{De_j,De_k} = \sum_{j,k \in J} \sum_{\ell=1}^m d_{j\ell}d_{k\ell}.
		\end{align*}
		This is the sum of $\abs{J}^2m$ random variables, all bounded in $[-\Lambda^2, \Lambda^2]$. Again, Hoeffding's inequality implies
		\begin{align*}
			\prb{ \sprod{D \epsilon_0, D\epsilon_0}- \erw{\sprod{D \epsilon_0, D\epsilon_0}}<-t} \leq \exp\left(- \frac{2\abs{J}^2m t^2}{4\Lambda^4}\right).
		\end{align*}
	Let us also note that $\erw{\sprod{D \epsilon_0, D\epsilon_0}}	= \abs{J}m \sigma^2$ (since $\erw{d_{j\ell}d_{k\ell}} = \sigma^2 \delta_{ 	jk}$.

	\sg{Summarizing}, we obtain
	\begin{align*}
		\norm{\nu}^2  \leq m\rho^2 + \abs{J}m \sigma^2 + t_1
	\end{align*}
	with a failure probability smaller than
	\begin{align*}
	  \exp\left(- \frac{2\abs{J}m t_1^2}{4\Lambda^4} \right),
	\end{align*}
for some $t_1>0$.
	We choose $t_1 = m\sigma^2/(4\abs{J})$ to obtain the final failure probability
	\begin{align*}
		\exp\left(- \frac{2m^3 \sigma^4}{16\cdot 4\Lambda^4} \right)
	\end{align*}
	We see that provided $ m \gtrsim \left(\frac{\Lambda}{\sigma} \log(\epsilon^{-1})^\frac{3}{4}\right)^{\frac{4}{3}} $, we \sg{obtain}
	\begin{align*}
		\norm{\nu}_2^2 \leq m(\rho^2 + \Lambda^2 (\abs{J}+ 1/\abs{J}))
	\end{align*}
	\sg{with a probability} larger than $1-\epsilon$, which was to be proven.
\end{proof}

We can now state and prove \sg{the second main result Theorem \ref{thm:simple2} more precisely}.

\begin{theorem}\label{thm:phaseTrans}
	Let $A\in \R^{m,N}$ be a biased measurement matrix of the form \eqref{BiasedMM} with $\erw{d_{ij}^2}=\sigma^2$ and $d_{ij}\in [-\Lambda, \Lambda]$, for $i\in [m], j\in [N]$. Fix some tolerance $\varepsilon>0$. A binary signal $x_0\sg{\in\{0,1\}^N}$ with $\|x_0\|_0=k$  is the unique solution of \eqref{Pbin} and \eqref{LSbin} for $b=Ax_0$ with probability larger than $1-\varepsilon$, provided
\begin{align}
m \gtrsim \max\left(\frac{\Lambda^2}{\mu^2}, \abs{\min(k, N-k)} \frac{\Lambda^4}{\sigma^4}\right)\log \left(\frac{N}{\varepsilon}\right)
\end{align}
with a constant depending only on $\sigma$ and $\mu^{-1}$. 

Under the additional assumption $m \gtrsim \left(\frac{\Lambda}{\sigma}\right)^{4/3}\log(\varepsilon^{-1})$ the solution $x_*$ of \eqref{LSbin} for $b= Ax_0 + n$ with $\norm{n}_2 \leq \epsilon$ \sg{for some $\epsilon >0$} obeys
\begin{align*}
\norm{x_0-x_*}_2 \leq \sqrt{\frac{9\left(\frac{16\sigma^2}{\mu^2}+ \min(k, N-k)\right)}{m \sigma^2}}\cdot \epsilon.
\end{align*}
\end{theorem}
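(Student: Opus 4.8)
The plan is to assemble the exact-recovery and the stability claims from the dual-certificate criterion of Proposition \ref{eq:StabCert} together with the two concentration estimates of Lemmas \ref{lem:certHk} and \ref{lem:certNorm}, after first exploiting the binary symmetry to pass to the sparser of the two signals $x_0$ and $\one-x_0$. Concretely, I would let $\epsilon_0$ be whichever of $\one_K$, $\one_{K^c}$ has smaller support, so that $J := \supp \epsilon_0$ satisfies $\abs{J} = \min(k, N-k)$, and I would build the certificate $\nu$ exactly as in \eqref{eq:dualCertificate} with $\rho = -\mu^{-1}\sigma^2/8$. The reason for working with the sparser support is that the measurement counts in both lemmas scale with $\abs{J}$, and this is precisely how $\min(k, N-k)$ enters the hypotheses of the theorem.

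For the noiseless recovery claim, I would invoke Lemma \ref{lem:certHk} to obtain, with probability at least $1-\varepsilon$ and under the stated measurement bound, a vector $\nu$ with $A^*\nu \in H_J^{m\sigma^2/36}$. Since $H_J^t \subseteq H_J^+$ for every $t>0$, this verifies condition (iii) of Proposition \ref{prop:HKPlus}, giving $\ker(A)\cap H_J=\{0\}$. The map $w\mapsto -w$ sends $H_J$ onto $H_{J^c}$ and fixes $\ker(A)$, so this condition is equivalent to $\ker(A)\cap H_K=\{0\}$ whether $J=K$ or $J=K^c$. Proposition \ref{prop1} then promotes this single kernel condition to all four equivalent statements: $x_0$ and $\one-x_0$ are the unique solutions of \eqref{Pbin}, and the feasible set $\{x\in[0,1]^N:Ax=Ax_0\}$ is a singleton, whence \eqref{LSbin} returns $x_0$ as well.

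For the stability estimate, I would run Lemma \ref{lem:certNorm} in parallel to control $\norm{\nu}_2\le s$ with $s^2 = m(\rho^2 + \sigma^2(\abs{J}+1/\abs{J}))$, adding its mild requirement $m\gtrsim (\Lambda/\sigma)^{4/3}\log(\varepsilon^{-1})$ to the hypotheses. A union bound makes both the certificate membership and the norm control hold simultaneously with probability at least $1-2\varepsilon$, the factor $2$ being absorbed into the tolerance. Negating $\nu$ if necessary so that the certificate lies in $H_K^t$ with $t=m\sigma^2/36$, I would feed $s$ and $t$ into Proposition \ref{eq:StabCert} to obtain $\norm{x_0-x_*}_2\le \tfrac{2s}{t}\epsilon$; substituting the values of $s$ and $t$ and simplifying should then yield the displayed factor $\sqrt{9(16\sigma^2/\mu^2 + \min(k,N-k))/(m\sigma^2)}$.

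The genuinely delicate part is not the assembly but the bookkeeping of signs and constants. One must track that the certificate built for the sparser support $J$ certifies the kernel condition for the \emph{original} $K$, which the $w\mapsto -w$ symmetry handles, and that a suitable negation of $\nu$ lands the certificate in $H_K^t$ so that Proposition \ref{eq:StabCert}, phrased for $x_0=\one_K$, applies verbatim. The final constant is sensitive both to the precise value of $\rho$ and to which of the two gaps recorded in the proof of Lemma \ref{lem:certHk} is taken as $t$; reconciling these choices so that the arithmetic produces exactly the stated factor, rather than one differing by a multiplicative constant, is where the care is required.
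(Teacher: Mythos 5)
Your proposal is correct and follows essentially the same route as the paper: choose $\epsilon_0$ as the sparser of $x_0$ and $\one-x_0$, apply Lemma \ref{lem:certHk} to place $A^*\nu$ (or $A^*(-\nu)$) in $H_K^t$ so that Proposition \ref{prop:HKPlus} yields $\ker(A)\cap H_K=\{0\}$, and then combine Lemma \ref{lem:certNorm} with Proposition \ref{eq:StabCert} for the noisy bound. Your version is in fact more careful than the paper's own two-line argument about the sign bookkeeping (the $w\mapsto -w$ symmetry between $H_J$ and $H_{J^c}$, the union bound over the two lemmas, and the value of $\rho$, on which the paper itself is internally inconsistent), so no gap to report.
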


\begin{proof}
	Let $\kappa= \min (k, N-k)$, and choose $\epsilon_0$ as the sparser of the two vectors $x_0$ and $\one-x_0$. By applying  Lemma \ref{lem:certHk}, we obtain a vector $\nu$ which either itself (in the case $\epsilon_0=x_0$) or its negative $-\nu$ (in the case $\epsilon_0 = \one - x_0$) lies in $H_k^t$, for some $t>0$, making Proposition \ref{prop:HKPlus} applicable. This concludes the proof of the first part.
	
	As for the second one, the additional assumption makes Lemma \ref{lem:certNorm} applicable, which \sg{provides} us \sg{with} the bound on the norm of $\nu$ needed to apply Proposition \ref{eq:StabCert}. Remembering that we have to choose $\rho= 4\sigma^2/\mu$ to \sg{be allowed to apply} \ref{lem:certHk}, we arrive at the stated estimate.
\end{proof}

	\label{sec:Main}

\subsection*{Acknowledgements}
Axel Flinth acknowledges support by the Deutsche Forschungsgemeinschaft (DFG) Grant KU 1446/18-1. \sg{Sandra Keiper acknowledges support by the DFG Collaborative Research
Center TRR 109 “Discretization in Geometry and Dynamics”. Both authors acknowledge support by the Berlin Mathematical School.}

\bibliographystyle{amsplain}
\bibliography{Bernoulli}

\end{document}